\let\mathbb\mathds
\DeclareMathAlphabet\oldmathcal{OMS}        {cmsy}{b}{n}
\SetMathAlphabet    \oldmathcal{normal}{OMS}{cmsy}{m}{n}
\DeclareMathAlphabet\oldmathbcal{OMS}       {cmsy}{b}{n}
\newtheorem{theorem}{Theorem}[section]
\newtheorem{lemma}[theorem]{Lemma}
\newtheorem{proposition}[theorem]{Proposition}
\newtheorem{corollary}[theorem]{Corollary}
\newtheorem{def/prop}[theorem]{Definition/Proposition}
\newenvironment{example}{\medskip \refstepcounter{theorem}
\noindent  {\bf Example \thetheorem}.\rm}{\,}
\newenvironment{remark}{\medskip \refstepcounter{theorem}
\newcommand     {\comment}[1]   {}
\newcommand{\mute}[2] {}
\newcommand     {\printname}[1] {}

\noindent  {\bf Remark \thetheorem}.\rm}{\,}
\newtheorem*{ack}{Acknowledgements}
\def\<{\langle}
\def\>{\rangle}
\def\fract#1#2{\raise4pt\hbox{$ #1 \atop #2 $}}
\def\decdnar#1{\phantom{\hbox{$\scriptstyle{#1}$}}
\left\downarrow\vbox{\vskip15pt\hbox{$\scriptstyle{#1}$}}\right.}
\def\bbc{{\mathbb C}}
\def\bbp{{\mathbb P}}
\def\bbq{{\mathbb Q}}
\def\bbr{{\mathbb R}}
\def\bbz{{\mathbb Z}}
\def\gra{\alpha}
\def\grb{\beta}
\def\grg{\gamma}
\def\gro{\omega}
\def\grs{\sigma}
\def\grD{\Delta}
\def\grL{\Lambda}
\def\bfp{{\bf p}}
\def\bfv{{\bf v}}
\def\bfw{{\bf w}}
\def\cald{{\mathcal D}}
\def\cali{{\mathcal I}}
\def\calp{{\mathcal P}}
\def\cals{{\oldmathcal S}}
\def\la#1{\hbox to #1pc{\leftarrowfill}}
\def\ra#1{\hbox to #1pc{\rightarrowfill}}
\def\gt{{\mathfrak t}}
\def\gI{{\mathfrak I}}
\def\hook{\mathbin{\hbox to 6pt{%
                 \vrule height0.4pt width5pt depth0pt
                 \kern-.4pt
                 \vrule height6pt width0.4pt depth0pt\hss}}}
\def\12{\xi_{k_1,k_2}}
\def\m5{M^5_{k_1,k_2}}
\begin{document}

\title{On the Topology of some Sasaki-Einstein Manifolds}

\author{Charles P. Boyer and Christina W. T{\o}nnesen-Friedman}\thanks{Both authors were partially supported by grants from the Simons Foundation, CPB by (\#245002) and CWT-F by (\#208799)}
\address{Charles P. Boyer, Department of Mathematics and Statistics,
University of New Mexico, Albuquerque, NM 87131.}
\email{cboyer@math.unm.edu} 
\address{Christina W. T{\o}nnesen-Friedman, Department of Mathematics, Union
College, Schenectady, New York 12308, USA } \email{tonnesec@union.edu}

\keywords{Sasaki-Einstein metrics, join construction, Hamiltonian 2-form}

\subjclass[2000]{Primary: 53D42; Secondary:  53C25}

\maketitle

\markboth{Topology of Sasaki-Einstein Manifolds}{Charles P. Boyer and Christina W. T{\o}nnesen-Friedman}

%\centerline{NOTES}

\begin{abstract}
This is a sequel to our paper \cite{BoTo14a} in which we concentrate on developing some of the topological properties of Sasaki-Einstein manifolds. In particular, we explicitly compute the cohomology rings for several cases not treated in \cite{BoTo14a} and give a formula for homotopy equivalence in one particular 7-dimensional case. 
\end{abstract}

%\tableofcontents

\section{Introduction}

Recently the authors have been able to obtain many new results on extremal Sasakian geometry \cite{BoTo12b,BoTo11,BoTo13,BoTo14a} by giving a geometric construction that combines the `join construction' of \cite{BG00a,BGO06} with the `admissible construction of Hamiltonian 2-forms' for extremal K\"ahler metrics described in \cite{ApCaGa06,ACGT04,ACGT08,ACGT08c}. The current paper is a result of re-arranging the two previous ArXiv papers \cite{BoTo13bArX,BoTo14aArx}. The basic analysis of both the constant scalar curvature and Sasaki-Einstein cases were combined in \cite{BoTo14a} which also contains the foundational topological description. The current paper contains further results on the topology of the Sasaki-Einstein manifolds most of which appeared in \cite{BoTo13bArX}, but were left out of \cite{BoTo14a}.

The main result concerning Sasaki-Einstein manifolds in \cite{BoTo14a} is:

\begin{theorem}\label{admjoinse}
Let $M_{l_1,l_2,\bfw}=M\star_{l_1,l_2}S^3_\bfw$ be the $S^3_\bfw$-join with a regular Sasaki manifold $M$ which is an $S^1$-bundle over a compact positive K\"ahler-Einstein manifold $N$ with a primitive K\"ahler class $[\gro_N]\in H^2(N,\bbz)$. Assume that the relatively prime positive integers $(l_1,l_2)$ are the relative Fano indices given explicitly by 
$$l_{1}(\bfw)=\frac{\cali_N}{\gcd(w_1+w_2,\cali_N)},\qquad   l_2(\bfw)=\frac{w_1+w_2}{\gcd(w_1+w_2,\cali_N)},$$ where $\cali_N$ denotes the Fano index of $N$. Then for each vector $\bfw=(w_1,w_2)\in \bbz^+\times\bbz^+$ with relatively prime components satisfying $w_1>w_2$ there exists a Reeb vector field $\xi_\bfv$ in the 2-dimensional $\bfw$-Sasaki cone on $M_{l_1,l_2,\bfw}$ such that the corresponding Sasakian structure $\cals=(\xi_\bfv,\eta_\bfv,\Phi,g)$ is Sasaki-Einstein (SE).  
\end{theorem}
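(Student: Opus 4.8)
The plan is to reduce the statement to the solvability of a single ordinary differential equation coming from the admissible (Hamiltonian $2$-form) ansatz, to pin down the correct Reeb ray by a cohomological monotonicity condition, and then to produce an explicit positive solution of that ODE.

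First I would describe the transverse K\"ahler geometry of $M_{l_1,l_2,\bfw}$ along the $\bfw$-Sasaki cone. Writing $\dim M_{l_1,l_2,\bfw}=2n+1$, each Reeb field $\xi_\bfv$ with $\bfv=(v_1,v_2)$ in the two-dimensional cone determines a characteristic foliation $\mathcal{F}_{\xi_\bfv}$ whose transverse holomorphic structure is of admissible ACGT type: the transverse K\"ahler form $\omega^T_\bfv$ is built from the K\"ahler-Einstein metric on $N$, a connection $1$-form for the bundle $M\to N$, and a momentum profile $\Theta$ on $[-1,1]$, with the fiber data governed by $\bfw$. When $\xi_\bfv$ is quasi-regular the leaf space is precisely the corresponding admissible projective orbifold over $N$, and the irrational members of the cone are then handled by a density and continuity argument.

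Second I would impose the Einstein condition, which for a Sasakian structure in the cone is equivalent to two requirements: (i) the transverse K\"ahler class $[\omega^T_\bfv]$ is a positive multiple of the basic first Chern class $c_1^B$ of $\mathcal{F}_{\xi_\bfv}$ (the transverse Fano, or monotone, condition), and (ii) the transverse K\"ahler-Einstein equation $\rho^T_\bfv=(2n+2)\,\omega^T_\bfv$ holds after a transverse homothety. Condition (i) is purely cohomological: using that $[\omega_N]$ is primitive of Fano index $\cali_N$, I would compute $c_1^B$ and the admissible K\"ahler classes explicitly in $H^2_B(\mathcal{F}_{\xi_\bfv})$. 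This is exactly where the relative Fano indices $l_1(\bfw),l_2(\bfw)$ are forced, since they are the unique choice making $c_1^B$ an admissible K\"ahler class, and the proportionality in (i) then selects a single ray $\bfv$ (equivalently, the distinguished critical point of the volume functional on the cone).

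Third, for this distinguished $\bfv$ I would solve (ii) by the ACGT reduction. For an admissible metric the equation $\rho^T=(2n+2)\omega^T$ collapses to an explicit linear ODE for $\Theta$, whose solution is determined up to the endpoint data $\Theta(\pm1)=0$ and $\Theta'(\pm1)=\mp 2$ imposed by smoothness across the zero and infinity sections; integrating yields $\Theta$ as an explicit polynomial in $z$ with coefficients depending on $\bfw$ and $(l_1,l_2)$. The genuinely hard step, and the main obstacle, is to verify that this $\Theta$ is strictly positive on the open interval $(-1,1)$: positivity is precisely what upgrades the admissible tensor to an honest positive-definite transverse K\"ahler metric, hence $\cals$ to an honest Sasakian metric. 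I expect the hypothesis $w_1>w_2$, together with the specific values of $(l_1,l_2)$, to be exactly what controls the signs of the leading coefficient and of the boundary contributions of $\Theta$ and thereby rules out any interior zero. Once positivity is established, the admissible data define a transverse K\"ahler-Einstein metric in the selected class, and the associated Sasakian structure $\cals=(\xi_\bfv,\eta_\bfv,\Phi,g)$ is Sasaki-Einstein with the asserted Reeb field, completing the proof.
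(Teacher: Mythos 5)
First, a point of order: the present paper does not prove Theorem \ref{admjoinse} at all --- it is imported verbatim from \cite{BoTo14a}, and this paper only develops the topology of the resulting manifolds. So the comparison must be with the proof in \cite{BoTo14a}, whose strategy (realize the transverse K\"ahler geometry along the $\bfw$-Sasaki cone as admissible in the ACGT sense, reduce the transverse K\"ahler--Einstein equation to an ODE for the momentum profile, and check positivity) is indeed the one you outline. Within that outline, however, your step (i) contains a genuine error that hides the actual heart of the argument. Once $(l_1,l_2)$ are taken to be the relative Fano indices, $c_1(\cald_{l_1,l_2,\bfw})=0$, and then for \emph{every} Reeb field $\xi_\bfv$ in the cone the basic class $c_1^B$ is a multiple of $[d\eta_\bfv]_B$, with the constant adjustable by a transverse homothety. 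So the ``transverse Fano/monotone'' condition does not select a ray; it holds identically on the cone and only forces the values of $(l_1,l_2)$ stated in the theorem (this is Lemma \ref{c10} here). What actually selects $\xi_\bfv$ is the solvability of the endpoint-constrained ODE: for each ray the explicit profile determined by the Einstein equation and the smoothness conditions $\Theta(\pm1)=0$, $\Theta'(\pm1)=\mp2$ is overdetermined by exactly one condition, yielding a single equation $f(b)=0$ in the ray parameter $b\in(0,1)$, and the existence proof in \cite{BoTo14a} consists of showing that $f$ changes sign on $(0,1)$ (an intermediate value argument). Your proposal never produces this equation or exhibits its root, so as written it does not establish the existence of the Reeb vector field, which is the whole content of the theorem.

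Two secondary issues. The ``density and continuity argument'' for irrational rays is both unnecessary and unjustified: the admissible description of the transverse K\"ahler structure applies uniformly across the cone (the ODE does not see rationality of $\bfv$), whereas taking limits of quasi-regular Sasaki--Einstein structures would require uniform estimates you do not supply --- and indeed most of the structures produced are irregular, so they are not limits one can avoid. Second, you place the main weight on positivity of $\Theta$ on $(-1,1)$ and attribute it to the hypothesis $w_1>w_2$; in fact $w_1>w_2$ is essentially an ordering normalization (with $\bfw=(1,1)$ the regular case of \cite{BG00a}), and positivity is verified in \cite{BoTo14a} directly from the explicit formula for $F=\Theta p_c$ once the root $b$ is found. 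The step that cannot be omitted is the root-finding, not the positivity check.
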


The procedure involved taking a join of a regular Sasaki-Einstein manifold $M$ with the weighted 3-sphere $S^3_\bfw$, that is, $S^3$ with its standard contact structure, but with a weighted contact 1-form whose Reeb vector field generates rotations with generally different weights $w_1,w_2$ for the two complex coordinates $z_1,z_2$ of $S^3\subset \bbc^2$. We call this the $S^3_\bfw$-join. By the $\bfw$-Sasaki cone we mean the two dimensional subcone of Sasaki cone induced by the Sasaki cone of $S^3_\bfw$. It is denoted by $\gt_\bfw^+$ and can be identified with the open first quadrant in $\bbr^2$.

The SE metrics obtained from Theorem \ref{admjoinse} were obtained earlier by physicists \cite{GHP03,GMSW04a,GMSW04b,CLPP05,MaSp05b} working on the AdS/CFT correspondence. Their method, particularly that of \cite{GMSW04b}, is very closely related to the Hamiltonian 2-form approach of \cite{ApCaGa06} (cf. Section 4.3 of \cite{Spa10}). In fact Thorem \ref{admjoinse} indicates that the physicist's results fit naturally into our geometric construction. Furthermore, we showed in \cite{BoTo14a} that our geometric approach leads naturally to an algorithm for computing the cohomology ring of the $2n+3$-manifolds. In the present paper we explicitly compute the cohomology ring of all such examples of SE manifolds in dimension $7$ showing that there are a countably infinite number of distinct homotopy types of such manifolds.

Most of the SE structures in Theorem \ref{admjoinse} are irregular. Such structures have irreducible transverse holonomy \cite{HeSu12b}, implying there can be no generalization of the join procedure to the irregular case. We must deform within the Sasaki cone to obtain them. Furthermore, it follows from \cite{RoTh11,CoSz12} that constant scalar curvature Sasaki metrics (hence, SE) imply a certain K-semistability.

Of particular interest is the join $M^{2r+3}_{l_1,l_2,\bfw}=S^{2r+1}\star_{l_1,l_2}S^3_\bfw$ of the standard odd dimensional sphere with the weighted $S^3_\bfw$ where 
\begin{equation}\label{sphjoinls}
(l_1,l_2)=\bigl(\frac{r+1}{\gcd(w_1+w_2,r+1)},\frac{w_1+w_2}{\gcd(w_1+w_2,r+1)}\bigr).
\end{equation}
By Theorem 4.5 of \cite{BoTo14a} its cohomology ring is
\begin{equation}\label{sphjoincohr}
\bbz[x,y]/(w_1w_2l_1(\bfw)^2x^2,x^{r+1},x^2y,y^2)
\end{equation}
where $x,y$ are classes of degree $2$ and $2r+1$, respectively.

For the manifolds $M^{7}_{\bfw}$ of dimension 7 ($r=2$) much more is known about the topology. These are special cases of what are called generalized Witten spaces in \cite{Esc05}. In particular, the homotopy type was given in \cite{Kru97}, while the homeomorphism and diffeomorphism type was given in \cite{Esc05}. For our subclass admitting Sasaki-Einstein metrics we give necessary and sufficient conditions on $\bfw$ for homotopy equivalence when the order of $H^4$ is odd in Proposition \ref{homequivprop} below. Thus, we answer in the affirmative the existence of Einstein metrics on certain generalized Witten manifolds.

Aside from the $\bfw$-join with the 5-sphere, for dimension 7 our method produces Sasaki-Einstein manifolds $M_{k,\bfw}^7$ on lens space bundles over all del Pezzo surfaces $\bbc\bbp^2\# k\overline{\bbc\bbp}^2$ that admit a K\"ahler-Einstein metric. In particular 

\begin{theorem}\label{delPezzo}
For each relatively prime pair $(w_1,w_2)$ there exist Sasaki-Einstein metrics on the 7-manifolds $M^7_{k,\bfw}$ with cohomology ring
$$H^q(M^7_{k,\bfw},\bbz)\approx \begin{cases}
                    \bbz & \text{if $q=0,7$;} \\
                    \bbz^{k+1} & \text{if $q=2,5$;} \\
                    \bbz^k_{w_1+w_2}\times \bbz_{w_1w_2} & \text{if $q=4;$} \\
                     0 & \text{if otherwise}, 
                     \end{cases}$$
with the ring relations determined by $\gra_i\cup \gra_j=0, w_1w_2s^2=0,\break (w_1+w_2)\gra_i\cup
s=0,$ and $\gra_i,s$ are the $k+1$ two classes with $i=1,\cdots k$ where $k=3,\cdots,8$. Furthermore, when $4\leq k\leq 8$ the local moduli space of Sasaki-Einstein metrics has real dimension $4(k-4)$.
\end{theorem}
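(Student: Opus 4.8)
The plan is to establish the three assertions — existence of the Sasaki-Einstein metrics, the cohomology ring, and the dimension of the moduli space — separately, in each case reducing everything to data attached to the del Pezzo base $N=\bbc\bbp^2\# k\overline{\bbc\bbp}^2$ and the weight vector $\bfw$.

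For existence I would apply Theorem \ref{admjoinse} directly. The del Pezzo surfaces $N=\bbc\bbp^2\# k\overline{\bbc\bbp}^2$ carry positive Kähler-Einstein metrics precisely for $3\le k\le 8$ (by the work of Tian and Tian-Yau; the cases $k=1,2$ are obstructed by non-reductive automorphism groups), so each such $N$ is an admissible base. For $k\ge 1$ the anticanonical class $-K_N=3h-\sum_{i=1}^k e_i$ is primitive in $H^2(N,\bbz)$, so the Fano index is $\cali_N=1$ and the relative indices of Theorem \ref{admjoinse} degenerate to $(l_1,l_2)=(1,w_1+w_2)$. Feeding the regular Sasaki-Einstein $S^1$-bundle $M$ over $N$ and $S^3_\bfw$ into that theorem then produces, for each relatively prime pair $(w_1,w_2)$, a Reeb field $\xi_\bfv$ in the $\bfw$-Sasaki cone whose Sasakian structure is Sasaki-Einstein; this is the asserted metric on $M^7_{k,\bfw}$.

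For the cohomology groups I would run the algorithm of \cite{BoTo14a}. The quasi-regular Reeb field in the $\bfw$-cone exhibits $M^7_{k,\bfw}$ as the total space of an $S^1$-orbibundle over the product Kähler orbifold $N\times\bbc\bbp^1[\bfw]$, where $\bbc\bbp^1[\bfw]=S^3_\bfw/S^1$, with Euler class $\gamma=l_2[\gro_N]+l_1[\gro_{\bbc\bbp^1[\bfw]}]$; equivalently it is a weighted lens-space bundle over $N$. I would write the Gysin sequence of this orbibundle, inserting $H^0(N)=H^4(N)=\bbz$, $H^2(N)=\bbz^{k+1}=\langle h,e_1,\dots,e_k\rangle$ with intersection form $\mathrm{diag}(1,-1,\dots,-1)$ and vanishing odd cohomology, together with the orbifold cohomology of $\bbc\bbp^1[\bfw]$, whose weighted structure supplies the integers $w_1+w_2$ and $w_1w_2$. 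Tracking the maps $\cup\,\gamma$ through the sequence degree by degree, with $(l_1,l_2)=(1,w_1+w_2)$, then yields the groups listed in the four cases: the free classes of $H^2$ and $H^5$ come from $H^2(N)$ and its Poincaré dual, while the torsion of $H^4$ is produced by the connecting homomorphisms.

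The ring structure I would read off from the $H^*(N)$-module structure together with the intersection pairing. The $k+1$ degree-two generators $\gra_1,\dots,\gra_k,s$ arise as the images of a basis of $H^2(N)$ adapted to the pairing and of the distinguished weighted-fiber class. The computations to carry out are that every product of surface classes dies in the purely torsion group $H^4$, giving $\gra_i\cup\gra_j=0$, while $s^2$ and $\gra_i\cup s$ survive as generators of orders $w_1w_2$ and $w_1+w_2$, yielding $w_1w_2 s^2=0$, $(w_1+w_2)\gra_i\cup s=0$, and $H^4\cong\bbz_{w_1+w_2}^k\times\bbz_{w_1w_2}$. I expect this last torsion-linking step to be the principal obstacle: one must diagonalize the intersection form so that the $\gra_i$ behave uniformly, correctly account for the orbifold contribution of $\bbc\bbp^1[\bfw]$ to the connecting maps, and verify that the orders are \emph{exactly} $w_1+w_2$ and $w_1w_2$ rather than proper divisors.

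Finally, for the moduli dimension I would use deformation theory. Each Sasaki-Einstein structure in the family is transversally Kähler-Einstein, with transverse geometry modeled on the product $N\times\bbc\bbp^1[\bfw]$ in which the weighted factor is rigid, and fixing the Sasaki-Einstein Reeb $\xi_\bfv$ removes the Sasaki-cone parameters. Hence the local moduli space is governed by the moduli of Kähler-Einstein metrics on $N$, which by the Bando-Mabuchi uniqueness theorem coincides with the moduli of complex structures on the del Pezzo surface, i.e. with $k$ points of $\bbc\bbp^2$ in general position modulo $PGL(3,\bbc)$, of complex dimension $2k-8$. For $4\le k\le 8$ this is non-negative and gives real dimension $2(2k-8)=4(k-4)$, while for $k=3$ the surface is rigid, which is why the formula is stated only for $k\ge 4$.
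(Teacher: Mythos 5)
Your proposal is correct in all three parts, and the existence argument (Tian/Tian--Yau for $3\le k\le 8$, primitivity of $-K_N$ giving $\cali_N=1$ and $(l_1,l_2)=(1,|\bfw|)$, then Theorem \ref{admjoinse}) is exactly the paper's. Where you diverge is the cohomology computation: the paper (Theorem \ref{Sk7man}) runs the Leray--Serre spectral sequence of the \emph{top} fibration of diagram \eqref{orbifibrationexactseq}, with fiber $k(S^2\times S^3)\times S^3_\bfw$ over $\mathsf{B}S^1$, and uses commutativity of the diagram to read off the only nontrivial differentials, $d_2(\grb_i)=|\bfw|\,\gra_i\otimes s$ and $d_4(\grg)=w_1w_2s^2$, from which the torsion $\bbz_{|\bfw|}^k\times\bbz_{w_1w_2}$ and the relations $\gra_i\cup\gra_j=0$, $|\bfw|\gra_i\cup s=0$, $w_1w_2s^2=0$ drop out at once; you instead propose the Gysin sequence of the $S^1$-orbibundle over $N\times\bbc\bbp^1[\bfw]$ with Euler class $|\bfw|[\gro_N]+s$. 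The two are equivalent in content (your route needs $H^*(\mathsf{B}\bbc\bbp^1[\bfw],\bbz)$ as input, which is computed by the bottom fibration of the same diagram and is where $w_1w_2$ enters), but the paper's version avoids the cokernel computation you rightly flag as the delicate step: in the spectral sequence the orders $|\bfw|$ and $w_1w_2$ are exactly the values of $d_2$ and $d_4$ on generators, and since $\gcd(|\bfw|,w_1w_2)=1$ there is no extension ambiguity, whereas in your Gysin approach one must diagonalize $\cup\,(|\bfw|[\gro_N]+s)$ on $H^2(N)\oplus\bbz s$ by hand (this is essentially the computation of Theorem 5.4 of \cite{BG00a} for $\bfw=(1,1)$). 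On the moduli statement the paper offers no proof at all, only the remark that the automorphism group is discrete for $4\le k\le 8$; your count via Bando--Mabuchi and the configuration space of $k$ points modulo $PGL(3,\bbc)$, giving complex dimension $2k-8$ and hence real dimension $4(k-4)$, supplies the standard justification and is consistent with the paper's intent, though a fully rigorous version would still need to rule out additional transverse deformations of the Sasaki--Einstein structure beyond those of the K\"ahler--Einstein base.
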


\begin{ack}
The authors would like to thank David Calderbank for discussions and Matthias Kreck for providing us with a proof that the first Pontrjagin class is a homeomorphism invariant.
\end{ack}

\section{The $\bfw$-Sasaki cone when $c_1(\cald)=0$}
In this section we describe some of the properties of the $\bfw$-Sasaki cone when $c_1(\cald)=0$, ending with some examples.
Since we require that $N$ be a positive K\"ahler-Einstein manifold, we have $c_1(N)=\cali_N[\gro_N]$ where $\cali_N$ is the Fano index. Recall from \cite{BoTo14a} that the cohomological Einstein condition $c_1(\cald_{l_1,l_2,\bfw})=0$ implies 
\begin{lemma}\label{c10}
Necessary conditions for the Sasaki manifold $M_{l_1,l_2,\bfw}$ to admit a Sasaki-Einstein metric is that $\cali_N>0$, and that  
$$l_2=\frac{|\bfw|}{\gcd(|\bfw|,\cali_N)},\qquad l_{1}=\frac{\cali_N}{\gcd(|\bfw|,\cali_N)}.$$
\end{lemma}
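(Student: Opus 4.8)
The plan is to reduce the statement to the cohomological form of the Einstein condition and then to a single Diophantine identity. Recall from \cite{BoTo14a} that a Sasakian structure on $M_{l_1,l_2,\bfw}$ can be Sasaki-Einstein only if the first Chern class of its contact distribution vanishes, $c_1(\cald_{l_1,l_2,\bfw})=0$, and that the associated transverse K\"ahler-Einstein metric must have positive Einstein constant, i.e.\ the transverse space is Fano. The positivity immediately forces $\cali_N>0$: the transverse geometry of the join is built from that of $N$, and a positive (Fano) transverse structure cannot sit over a base $N$ with non-positive first Chern class, so $c_1(N)=\cali_N[\gro_N]$ must be positive.

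The core of the argument is to evaluate $c_1(\cald_{l_1,l_2,\bfw})$ in terms of $\cali_N$, $l_1$, $l_2$ and $\bfw$. Here I would use the join description of $M_{l_1,l_2,\bfw}$ as the quotient of $M\times S^3_\bfw$ by the locally free circle action with weights determined by $(l_1,l_2)$, and track how $\cald$ decomposes over the transverse space, which combines the Fano base $N$ (contributing $\cali_N[\gro_N]$) with the weighted projective line $\bbc\bbp^1[\bfw]$ (whose ``index'' is $|\bfw|=w_1+w_2$). Pushing these classes through the quotient expresses $c_1(\cald_{l_1,l_2,\bfw})$ as a fixed integral generator multiplied by $l_2\cali_N-l_1|\bfw|$. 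Consequently the Einstein constraint $c_1(\cald_{l_1,l_2,\bfw})=0$ is equivalent to the single relation
$$l_1|\bfw|=l_2\cali_N.$$

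It remains to solve this together with the standing requirement that $(l_1,l_2)$ be relatively prime, which is part of the data of the join. The relation says that $l_1/l_2=\cali_N/|\bfw|$ as positive rationals. Writing $g=\gcd(|\bfw|,\cali_N)$, the right-hand side in lowest terms is $(\cali_N/g)/(|\bfw|/g)$, and this reduced representative is unique; since $\gcd(l_1,l_2)=1$ the left-hand side is already in lowest terms, so comparing yields
$$l_1=\frac{\cali_N}{\gcd(|\bfw|,\cali_N)},\qquad l_2=\frac{|\bfw|}{\gcd(|\bfw|,\cali_N)},$$
as asserted.

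I expect the main obstacle to be the Chern-class computation in the second step: one must keep careful account of the two Reeb circle actions (those of $M$ and of $S^3_\bfw$) and of the $(l_1,l_2)$-weighted diagonal defining the join, and correctly identify the integral generator against which the coefficient $l_2\cali_N-l_1|\bfw|$ is measured. Once the foundational formula for $c_1(\cald)$ of such a join from \cite{BoTo14a} is invoked, the positivity deduction and the number-theoretic step are both routine.
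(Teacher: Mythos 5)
Your argument is correct and follows essentially the same route as the paper, which simply recalls from \cite{BoTo14a} that the cohomological Einstein condition $c_1(\cald_{l_1,l_2,\bfw})=0$ reduces to $l_1|\bfw|=l_2\cali_N$, after which coprimality of $(l_1,l_2)$ forces the stated values. Your positivity argument for $\cali_N>0$ and the reduced-fraction step are exactly the intended reasoning.
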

The integers $l_1,l_2$ in Lemma \ref{c10} were called {\it relative Fano indices} in \cite{BG00a}. For the remainder of the paper we assume that these integers take the values given by Lemma \ref{c10} unless explicitly stated otherwise.

A natural question that arises is whether the $\bfw$-cone contains a regular Reeb vector field.
\begin{proposition}\label{numreg}
Assume $\bfw\neq (1,1)$ and let $K=\gcd(\cali_N,|\bfw|)$. Then there are exactly $K-1$ different $\bfw$-Sasaki cones that have a regular Reeb vector field. These are given by
\begin{equation}\label{regw}
\bfw =\Bigl(\frac{K+n}{\gcd(K+n,K-n)},\frac{K-n}{\gcd(K+n,K-n)}\Bigr).
\end{equation}
where $1\leq n<K$.
\end{proposition}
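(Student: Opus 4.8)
The plan is to decide exactly when a quasi‑regular Reeb field in the cone $\gt_\bfw^+$ has a \emph{smooth} leaf space, and then to enumerate the weight vectors that admit one. First I would recall from \cite{BoTo14a} that a ray through a primitive lattice vector $\bfv=(v_1,v_2)$ is quasi‑regular and that its leaf space is the admissible ruled orbifold $S_\bfv\to N$, whose generic fibre is a weighted $\bbp^1$ and which carries orbifold structure only along the two sections $E_1,E_2$ coming from the poles $z_1=0,\,z_2=0$ of $S^3_\bfw$. Regularity of $\xi_\bfv$ is then equivalent to the vanishing (i.e. $=1$) of both ramification indices $m_1,m_2$ of $S_\bfv$ along $E_1,E_2$, since $N$ is smooth and contributes nothing away from these sections.

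The heart of the argument is to compute $m_1,m_2$ explicitly via the torus picture. Over a point of $N$ the join is $(S^1_{\xi_M}\times S^3_\bfw)/S^1$, carrying a $T^3$ with cocharacter lattice $\bbz^3$ in which the quotient circle is generated by $\bfc=(l_2,-l_1w_1,-l_1w_2)$ and the Reeb field by $\bfr_\bfv=(l_2,l_1v_1,l_1v_2)$, while the stabiliser of $E_i$ is the coordinate circle $\bfs_i$. Thus $m_i$ is the order of $\langle\bfs_i\rangle\cap\langle\bfc,\bfr_\bfv\rangle$ inside $T^3$, which equals $|\det(\bfc,\bfr_\bfv,\bfs_i)|$ divided by the saturation index $e_\bfv=[\,\overline{\bbz\bfc+\bbz\bfr_\bfv}:\bbz\bfc+\bbz\bfr_\bfv\,]$ (the content of $\bfc\wedge\bfr_\bfv$). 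A direct evaluation gives
\begin{equation*}
m_2=\frac{l_1l_2(v_1+w_1)}{e_\bfv},\qquad m_1=\frac{l_1l_2(v_2+w_2)}{e_\bfv},\qquad e_\bfv=\gcd\!\bigl(l_1^2(w_2v_1-w_1v_2),\,l_1l_2(v_1+w_1),\,l_1l_2(v_2+w_2)\bigr),
\end{equation*}
and one checks on the natural field $\bfv=\bfw$ that $\{m_1,m_2\}=\{w_1,w_2\}$, recovering $S^3_\bfw/\xi_\bfw=\bbp(w_1,w_2)$ as a sanity check.

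Imposing $m_1=m_2=1$ then forces, first, $v_1+w_1=v_2+w_2$, so that $v_2=v_1+w_1-w_2$; substituting this collapses $w_2v_1-w_1v_2$ to $-(w_1-w_2)(v_1+w_1)$, whereupon the requirement $e_\bfv=l_1l_2(v_1+w_1)$ reduces to $l_2\mid l_1(w_1-w_2)$, i.e. $l_2\mid(w_1-w_2)$ since $\gcd(l_1,l_2)=1$. I would also verify the converse: whenever $l_2\mid(w_1-w_2)$ the choice $v_1=1,\ v_2=1+w_1-w_2$ makes $e_\bfv=l_1l_2(v_1+w_1)$ and hence $m_1=m_2=1$. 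Therefore the $\bfw$‑cone contains a regular Reeb field precisely when $l_2=|\bfw|/K$ divides $w_1-w_2$ — a condition on $\bfw$ alone.

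The final step is number‑theoretic: since $\gcd(|\bfw|,w_1-w_2)\mid 2$ and $l_2\mid|\bfw|$, the divisibility $l_2\mid(w_1-w_2)$ forces $l_2\in\{1,2\}$, the two cases being $|\bfw|\mid\cali_N$ (then $l_2=1$) and $|\bfw|$ even with $w_1,w_2$ both odd (then $l_2=2$). Writing $w_1\pm w_2$ in terms of $(K,n)$ identifies these admissible weight vectors with exactly the list \eqref{regw}, the values $d=2$ and $d=1$ corresponding respectively to $l_2=1$ and $l_2=2$; as $(K+n)/(K-n)$ is strictly increasing on $1\le n<K$ the resulting vectors are pairwise distinct, yielding the count $K-1$. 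The step I expect to be the main obstacle is the explicit determination of $m_1,m_2$ — in particular getting the saturation index $e_\bfv$ right, as it is precisely the correction that turns the naive determinant (off by the factor $l_1l_2$) into the geometrically correct isotropy order; once that is in hand, the reduction to $l_2\mid(w_1-w_2)$ and the coprimality argument forcing $l_2\le 2$ are routine.
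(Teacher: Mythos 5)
Your endgame (the reduction $l_2\mid(w_1-w_2)$ forcing $l_2\in\{1,2\}$ via $\gcd(|\bfw|,w_1-w_2)\mid 2$, followed by rewriting $\bfw$ in terms of $(K,n)$ and counting $K-1$ values) is sound and is essentially the number theory the paper does; the paper, however, simply quotes Proposition 3.4 of \cite{BoTo14a} for the criterion ``a regular ray exists iff $w_1-w_2=n\,l_2$ for some $n\in\bbz^+$'' and goes straight to the arithmetic. The problem is with your attempt to re-derive that criterion: your explicit formulas for the ramification indices are wrong, and this can be checked against the paper itself. For $Y^{3,1}$ one has $\bfw=(2,1)$, $l_1=2$, $l_2=3$, and Example \ref{Ypq} states that $\xi_{(1,1)}$ is quasi-regular with $m_1=m_2=p=3$ (log pair $(S_2,(1-\tfrac13)(D_1+D_2))$). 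Your formulas give $e_\bfv=\gcd(-4,18,12)=2$ and hence $m_2=9$, $m_1=6$ --- not even equal to each other, whereas the correct indices for $\bfv=(1,1)$ are always equal. Consequently your conclusion that $m_1=m_2=1$ forces $v_1+w_1=v_2+w_2$, i.e.\ that the regular ray is $\bfv=(1,1+w_1-w_2)$, is also wrong: the regular ray, when it exists, is $\bfv=(1,1)$ (compare the discussion of $Y^{2,1}$, $\bfw=(3,1)$, around Corollary \ref{Ypqcor}, where the regular field is $\bfv=(1,1)$ and the quotient is $\bbc\bbp^2$ blown up at a point; your recipe would instead select $\bfv=(1,3)$, which is only quasi-regular with $m_2=3$). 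The correct dependence, as used implicitly throughout Example \ref{Ypq}, is of the shape $m_i=mv_i$ with $m=l_2/\gcd(l_2,\,w_1v_2-w_2v_1)$, from which $m_1=m_2=1$ forces $\bfv=(1,1)$ and $l_2\mid(w_1-w_2)$.

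So while you land on the right divisibility condition, you do so through an incorrect intermediate computation (most likely a misidentification of the cocharacters of the quotient circle and the Reeb ray in your $T^3$ lattice, which is exactly the step you flagged as delicate), and the argument as written does not establish the criterion. To repair it you would either need to redo the lattice computation so that it reproduces $m_1=m_2=p$ on $Y^{p,q}$ and $\{m_1,m_2\}=\{w_1,w_2\}$ at $\bfv=\bfw$ simultaneously, or do what the paper does and invoke Proposition 3.4 of \cite{BoTo14a} directly.
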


\begin{proof}
By Proposition 3.4 of \cite{BoTo14a} a $\bfw$-Sasaki cone contains a regular Reeb vector field if and only if there is $n\in \bbz^+$ such that 
$$w_1-w_2=n\frac{w_1+w_2}{\gcd(\cali_N,w_1+w_2)}.$$
Clearly, for a solution we must have $n<\gcd(\cali_N,w_1+w_2)$. Then we have a solution if and only if 
$$(K-n)w_1=(K+n)w_2$$
for all $1\leq n<K$. Since $w_1>w_2$ and they are relatively prime we have the unique solution Equation \eqref{regw} for each integer $1\leq n<K$.
\end{proof}

We have an immediate corollary to Proposition \ref{numreg}:
\begin{corollary}\label{cali1cor}
If $\cali_N=1$ there are no regular Reeb vector fields in any $\bfw$-Sasaki cone with $\bfw\neq (1,1)$.
\end{corollary}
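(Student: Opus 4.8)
The plan is to read off the conclusion directly from the count established in Proposition \ref{numreg}. Setting $\cali_N=1$, the integer $K=\gcd(\cali_N,|\bfw|)=\gcd(1,|\bfw|)$ equals $1$ for every admissible weight vector $\bfw$. Proposition \ref{numreg} then asserts that exactly $K-1=0$ of the $\bfw$-Sasaki cones with $\bfw\neq(1,1)$ carry a regular Reeb vector field, which is precisely the claim.

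Alternatively, and perhaps more transparently, I would return to the divisibility criterion recorded in the proof of Proposition \ref{numreg}: a $\bfw$-Sasaki cone contains a regular Reeb vector field if and only if there exists $n\in\bbz^+$ with $w_1-w_2=n\,(w_1+w_2)/\gcd(\cali_N,w_1+w_2)$, and any such $n$ must satisfy $n<\gcd(\cali_N,w_1+w_2)$. When $\cali_N=1$ this forces $\gcd(\cali_N,w_1+w_2)=1$, so the required $n$ would have to be a positive integer strictly less than $1$, which is impossible. Hence no regular Reeb vector field can occur in any such cone.

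There is no real obstacle here: the statement is an immediate specialization of Proposition \ref{numreg} to $\cali_N=1$. The only point worth confirming is that the standing hypothesis $\bfw\neq(1,1)$ of the proposition is carried over intact into the corollary, which it is, so both routes apply without modification.
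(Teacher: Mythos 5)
Your proposal is correct and matches the paper exactly: the paper states the corollary as an immediate consequence of Proposition \ref{numreg}, which is precisely your first argument ($\cali_N=1$ forces $K=1$, hence $K-1=0$ cones with a regular Reeb field). Your second route via the divisibility criterion is just an unwinding of the same proposition, so there is nothing further to add.
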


\begin{example}\label{Findex}
Let us determine the $\bfw$-joins with regular Reeb vector field for $\cali_N=2,3$.
For example, if $\cali_N=2$ for a solution to Equation \eqref{regw} we must have $K=2$ which gives
$n=1$ and $\bfw=(3,1)$. This has as a consequence Corollary \ref{Ypqcor} below. 
Similarly if $\cali_N=3$ we must have $K=3$, which gives two solutions $\bfw=(2,1)$ and $\bfw=(5,1)$.
\end{example}

\begin{example}\label{Ypq}
Recall the contact structures $Y^{p,q}$ on $S^2\times S^3$ first studied in the context of Sasaki-Einstein metrics in \cite{GMSW04a}, where $p$ and $q$ are relatively prime positive integers satisfying $p>1$ and $q<p$.
Since in this case the manifold $M_{l_1,l_2,\bfw}=M^3\star_{l_1,l_2}S^3_\bfw$ is $S^2\times S^3$, we have $N=S^2$ with its standard (Fubini-Study) K\"ahler structure. Hence, $\cali_N=2$. 

This was treated in Example 4.7 of \cite{BoPa10} although the conventions\footnote{In particular, there we chose $w_1\leq w_2$; whereas, here we use the opposite convention, $w_1\geq w_2$.} are slightly different. Here we set
\begin{equation}\label{pqw}
\bfw=\frac{1}{\gcd(p+q,p-q)}\bigl(p+q,p-q\bigr).
\end{equation}
Note that the conditions on $p,q$ eliminate the case $\bfw=(1,1)$. We claim that the following relations hold:
\begin{equation}\label{pqrel}
l_1=\gcd(p+q,p-q),\qquad l_2=p.
\end{equation}
To see this we first notice that Lemma \ref{c10} implies that to have a Sasaki-Einstein metric we must have $l_1=1$ and $l_2=\frac{|\bfw|}{2}$ if $|\bfw|$ is even, and $l_1=2$ and $l_2=|\bfw|$ if $|\bfw|$ is odd. Thus, the second of Equations (\ref{pqrel}) follows from the first and Equation (\ref{pqw}). To prove the first equation we first notice that since $p$ and $q$ are relatively prime, $\gcd(p+q,p-q)$ is either $1$ or $2$. Next from Equation (\ref{pqw}) we note that $2p=\gcd(p+q,p-q)|\bfw|$. So if $|\bfw|$ is odd, then $\gcd(p+q,p-q)$ must be even, hence $2$. So the first equation holds in this case. This also shows that if $|\bfw|$ is odd then $p=|\bfw|$ is also odd. Now if $|\bfw|$ is even then both $p+q$ and $p-q$ must be odd. But  then $\gcd(p+q,p-q)$ must be $1$. In this case $p=\frac{|\bfw|}{2}$ which can be either odd or even.

Then from Example \ref{Findex} above we must have $\bfw=(3,1)$ which from Equation \eqref{pqw} means $(p,q)=(2,1)$, so we recover the following result of \cite{BoPa10}:
\begin{corollary}\label{Ypqcor}
For $Y^{p,q}$ the $\bfw$-Sasaki cone has a regular Reeb vector field if and only if $p=2,q=1$.
\end{corollary}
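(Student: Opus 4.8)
The plan is to deduce this directly from Proposition \ref{numreg} and the parity analysis carried out in Example \ref{Ypq}. First I would record that for $Y^{p,q}$ the base is $N=S^2$ with its Fubini--Study structure, so $\cali_N=2$ and hence $K=\gcd(\cali_N,|\bfw|)=\gcd(2,|\bfw|)\in\{1,2\}$. By Proposition \ref{numreg}, a $\bfw$-Sasaki cone with $\bfw\neq(1,1)$ contains a regular Reeb vector field precisely when one of the $K-1$ admissible cones occurs; in particular a regular Reeb field can exist only if $K\geq 2$. Since the conditions $p>1$, $q<p$ exclude the case $\bfw=(1,1)$, the existence of a regular Reeb vector field forces $K=2$, i.e.\ $|\bfw|$ even.

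Next I would extract the unique candidate. When $K=2$ the only index in the range $1\leq n<K$ is $n=1$, so Equation \eqref{regw} yields the single cone
$$\bfw=\Bigl(\frac{K+1}{\gcd(K+1,K-1)},\frac{K-1}{\gcd(K+1,K-1)}\Bigr)=(3,1),$$
exactly as already observed for $\cali_N=2$ in Example \ref{Findex}. Thus the regular case is possible for $Y^{p,q}$ if and only if $\bfw=(3,1)$.

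It then remains to translate $\bfw=(3,1)$ back into the parameters $(p,q)$ via Equation \eqref{pqw}. Here I would invoke the parity bookkeeping of Example \ref{Ypq}: since $|\bfw|=4$ is even, both $p+q$ and $p-q$ are odd, so $\gcd(p+q,p-q)=1$ and Equation \eqref{pqw} reduces to $(p+q,p-q)=(3,1)$, giving the unique solution $p=2$, $q=1$. For the converse direction I would simply substitute $p=2,q=1$ into \eqref{pqw} to confirm $\bfw=(3,1)$, which by the above carries a regular Reeb vector field.

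There is no serious obstacle in this argument—it is a bookkeeping consequence of the results already established. The only point requiring a little care is the inversion of the correspondence $(p,q)\mapsto\bfw$ in \eqref{pqw}, where one must use the parity of $|\bfw|$ to pin down $\gcd(p+q,p-q)$ before solving the linear system; this is precisely the content already verified in Example \ref{Ypq}, so it may be cited rather than re-derived.
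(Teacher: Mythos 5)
Your proposal is correct and follows essentially the same route as the paper: the paper likewise invokes Proposition \ref{numreg} (via Example \ref{Findex}, which gives $\bfw=(3,1)$ as the unique regular case for $\cali_N=2$) and then inverts Equation \eqref{pqw} using the parity analysis of Example \ref{Ypq} to obtain $(p,q)=(2,1)$. You have merely spelled out the bookkeeping that the paper compresses into one sentence.
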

We remark that the  quotient of $Y^{2,1}$ by the regular Reeb vector field is $\bbc\bbp^2$ blown-up at a point; whereas, we have arrived at it from the $\bfw$-Sasaki cone of an $S^1$ orbibundle over $S^2\times\bbc\bbp^1[3,1]$.

For the general $Y^{p,q}$ the Reeb vector field of Theorem 4.2 of \cite{BoPa10} is equivalent to choosing $\bfv=(1,1)$ here. However, as stated in Corollary \ref{Ypqcor}, it is regular only when $p=2,q=1$. Otherwise, it is quasi-regular with ramification index $m=m_1=m_2=p$ if $p$ is odd, and $m=\frac{p}{2}$ if $p$ is even. This is what is called almost regular in \cite{BoTo14a}. In this case we have
 $$s=\gcd(l_2,w_1-w_2)=\gcd(p,\frac{2q}{l_1}).$$
So $s=1$ if $l_1=2$ that is when $|\bfw|$ is odd which also implies that $p$ is odd. Whereas, if $|\bfw|$ is even, $l_1=1$, so $s=\gcd(p,2q)$ which is $2$ if $p$ is even and $1$ if $p$ is odd. Now consider $n$. We have 
$$n=\frac{l_1}{s}(w_1-w_2)=\frac{2q}{s}.$$
Thus, $n=2q$ when $p$ is odd, and $n=q$ when $p$ is even in which case $q$ must be odd. Moreover, from Equation (\ref{ramind}) we have
$$m=\frac{|\bfw|}{s\gcd(|\bfw|,\cali_N)}= 
\begin{cases} p~ \text{if $p$ is odd} \\
                \frac{p}{2} ~\text{if $p$ is even.}
                \end{cases}$$
So in this case our log pair is $(S_{2q},\grD)$ with 
$$\grD=\bigl(1-\frac{1}{p}\bigr)(D_1+D_2)$$
if $p$ is odd, and $(S_q,\grD)$ with $q$ odd and
$$\grD=\bigl(1-\frac{2}{p}\bigr)(D_1+D_2)$$
if $p$ is even. Here $S_{2q}$ and $S_q$ are the even and odd Hirzebruch surfaces, respectively. Note for $Y^{2,1}$ there is no branch divisor, so it is regular as we know from Corollary \ref{Ypqcor} and $S_1$ is $\bbc\bbp^2$ blown-up at a point.
\end{example}

\section{The Topology of the Sasaki-Einstein manifolds}
We briefly recall the method used in \cite{BoTo14a} to prove Theorem \ref{admjoinse}. The idea is that if we know the differentials in the spectral sequence of the fibration 
\begin{equation}\label{MNspec}
M\ra{1.5}N\ra{1.5}\mathsf{B}S^1,
\end{equation}
we can use the commutative diagram of fibrations
\begin{equation}\label{orbifibrationexactseq}
\begin{matrix}M\times S^3_\bfw &\ra{2.6} &M_{l_1,l_2,\bfw}&\ra{2.6}
&\mathsf{B}S^1 \\
\decdnar{=}&&\decdnar{}&&\decdnar{\psi}\\
M\times S^3_\bfw&\ra{2.6} & N\times\mathsf{B}\bbc\bbp^1[\bfw]&\ra{2.6}
&\mathsf{B}S^1\times \mathsf{B}S^1\, 
\end{matrix} \qquad \qquad
\end{equation}
to compute the cohomology ring of the join $M_{l_1,l_2,\bfw}$. Here $\mathsf{B}G$ is the classifying space of a group $G$ or Haefliger's classifying space \cite{Hae84} of an orbifold if $G$ is an orbifold.

\subsection{Examples in General Dimension}
The example when $M$ is a regular Sasakian sphere $S^{2r+1}$ was worked out in \cite{BoTo14a} and further studied in \cite{BoTo14b}. Here we treat the 7-dimensional case in more detail. However, before doing so we give the following result for $M^{2r+3}_\bfw=S^{2r+1}\star_{l_1,l_2}S^3_\bfw$ with $(l_1(\bfw),l_2(\bfw))$ satisfying the conditions of Theorem \ref{admjoinse}.

\begin{lemma}\label{H4lem}
If $H^4(M^{2r+3}_\bfw,\bbz)=H^4(M^{2r+3}_{\bfw'},\bbz)$, then $w'_1w'_2=w_1w_2$ and $l_1(\bfw')=l_1(\bfw)$.
\end{lemma}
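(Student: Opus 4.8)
The plan is to first read off $H^4$ from the cohomology ring \eqref{sphjoincohr}. Since $x$ has degree $2$ and $y$ has degree $2r+1\ge 5$, the degree-four part of $\bbz[x,y]/(w_1w_2l_1(\bfw)^2x^2,\,x^{r+1},\,x^2y,\,y^2)$ is generated by $x^2$, and for $r\ge 2$ the relation $x^{r+1}=0$ does not involve $x^2$; the only relevant relation is $w_1w_2l_1(\bfw)^2x^2=0$. Hence $H^4(M^{2r+3}_\bfw,\bbz)$ is the cyclic group $\bbz_{w_1w_2l_1(\bfw)^2}$, and the hypothesis $H^4(M^{2r+3}_\bfw,\bbz)=H^4(M^{2r+3}_{\bfw'},\bbz)$ is equivalent to the single numerical identity
$$w_1w_2\,l_1(\bfw)^2 = w'_1w'_2\,l_1(\bfw')^2.$$
It therefore suffices to show that this identity forces $w_1w_2=w'_1w'_2$ and $l_1(\bfw)=l_1(\bfw')$.

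I would attack this prime by prime. Fix a prime $p$ and write $v_p$ for the $p$-adic valuation; set $c=v_p(r+1)$, which is the same for $\bfw$ and $\bfw'$. The crucial structural input is the coprimality $\gcd(w_1,w_2)=1$: it guarantees that $p$ divides at most one of $w_1,w_2$. Consequently, if $p\mid w_1w_2$ then $p\nmid(w_1+w_2)$, so $v_p\bigl(\gcd(w_1+w_2,r+1)\bigr)=0$, and the formula $l_1(\bfw)=(r+1)/\gcd(w_1+w_2,r+1)$ from \eqref{sphjoinls} gives $v_p(l_1(\bfw))=c$. This yields the dichotomy: either $v_p(w_1w_2)>0$, in which case $v_p(l_1(\bfw))=c$; or $v_p(w_1w_2)=0$, in which case $v_p(l_1(\bfw))\le c$ (always $\le c$ since $l_1(\bfw)\mid (r+1)$).

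The heart of the argument is then a recovery step: from the exponent $T:=v_p\bigl(w_1w_2l_1(\bfw)^2\bigr)=v_p(w_1w_2)+2v_p(l_1(\bfw))$ together with $c$ one can uniquely reconstruct the pair $\bigl(v_p(w_1w_2),\,v_p(l_1(\bfw))\bigr)$. Indeed, by the dichotomy, $v_p(w_1w_2)>0$ forces $T=v_p(w_1w_2)+2c>2c$, whereas $v_p(w_1w_2)=0$ forces $T=2v_p(l_1(\bfw))\le 2c$. Hence if $T>2c$ then $v_p(w_1w_2)=T-2c$ and $v_p(l_1(\bfw))=c$, while if $T\le 2c$ then $v_p(w_1w_2)=0$ and $v_p(l_1(\bfw))=T/2$. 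In either case $\bigl(v_p(w_1w_2),\,v_p(l_1(\bfw))\bigr)$ is determined by $(T,c)$ alone. Since $T$ and $c$ agree for $\bfw$ and $\bfw'$, I conclude $v_p(w_1w_2)=v_p(w'_1w'_2)$ and $v_p(l_1(\bfw))=v_p(l_1(\bfw'))$ for every prime $p$, which gives $w_1w_2=w'_1w'_2$ and $l_1(\bfw)=l_1(\bfw')$.

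I expect the main obstacle to be isolating exactly the property of the construction that makes the two factors $w_1w_2$ and $l_1(\bfw)$ separable inside their product $w_1w_2l_1(\bfw)^2$ --- namely the clean dichotomy above, which hinges entirely on the coprimality of $w_1$ and $w_2$. Without that hypothesis a prime could be shared between $w_1w_2$ and $w_1+w_2$, the two valuations would no longer be forced to the extreme values, and the recovery step would fail. The boundary case $T=2c$ also deserves a moment's care, since a priori it might appear to arise from $v_p(w_1w_2)>0$; it is the strict inequality $T>2c$ in that branch that rules this out and keeps the reconstruction single-valued.
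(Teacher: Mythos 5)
Your proof is correct and rests on essentially the same idea as the paper's: both reduce the hypothesis to the order identity $w_1w_2\,l_1(\bfw)^2=w'_1w'_2\,l_1(\bfw')^2$ and both exploit that $\gcd(w_1,w_2)=1$ forces any prime of $w_1w_2$ to miss $w_1+w_2$, hence to miss $\gcd(|\bfw|,r+1)$. The paper phrases this globally --- $\gcd(|\bfw'|,r+1)^2$ is coprime to $w'_1w'_2$, so the two gcd's divide each other and are equal --- while you unroll the same coprimality into a prime-by-prime reconstruction of the valuations; the conclusion is reached either way.
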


\begin{proof}
The equality of the 4th cohomology groups together with the definition of $l_1$ imply
$$w'_1w'_2l_1\gcd(|\bfw|,r+1)^2=w_1w_2\gcd(|\bfw'|,r+1)^2.$$
Set $g_\bfw=\gcd(|\bfw|,r+1)$ and $g_{\bfw'}=\gcd(|\bfw'|,r+1)$. Assume $g_{\bfw'}>1$. Since $\gcd(w'_1,w'_2)=1$, $g_{\bfw'}$ does not divide $w'_1w'_2$. Thus, $g_{\bfw'}^2$ divides $g_\bfw^2$. Interchanging the roles of $\bfw'$ and $\bfw$ gives $g_{\bfw'}=g_\bfw$ which implies $l_1(\bfw')=l_1(\bfw)$, and hence, the lemma in the case that $g_{\bfw'}>1$. Now assume $g_{\bfw'}=1$. Then we have $w_1w_2=w'_1w'_2g_\bfw^2$ which implies that $g_\bfw$ divides $w_1w_2$. But then since $w_1,w_2$ are relatively prime, we must have $g_\bfw=1$.
\end{proof}

Let us set $W=w_1w_2$, and write the prime decomposition of $W=w_1w_2=p_1^{a_1}\cdots p_k^{a_k}$ Let $P_k$ be the number of partitions  of $W$ into the product $w_1w_2$ of unordered relatively prime integers, including the pair $(w_1w_2,1)$. Then a counting argument gives $P_k=2^{k-1}$. Once counted we then order the pair $w_1>w_2$ as before. Let $\calp_W$ denote the set of $(2r+3)$-manifolds $M^{2r+3}_\bfw$ with isomorphic cohomology rings. Then Lemma \ref{H4lem} implies that the cardenality of $\calp_W$ is $P_k=2^{k-1}$. This proves 
\begin{proposition}\label{cor1}
Let $k$ denote the length of the prime decomposition of $w_1w_2$, then there are $2^{k-1}$ simply connected Sasaki-Einstein manifolds $M^{2r+3}_\bfw=S^{2r+1}\star_{l_1,l_2}S^3_\bfw$ of dimension $2r+3$ with isomorphic cohomology rings such that $H^4$ has order $w_1w_2l_1(\bfw)^2$.
\end{proposition}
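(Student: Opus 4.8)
The statement bundles a geometric assertion (each $M^{2r+3}_\bfw$ is a simply connected Sasaki--Einstein manifold with the stated $H^4$) with a counting assertion (these manifolds fall into cohomology-ring families of size $2^{k-1}$). The plan is to dispatch the geometric part by specializing the general existence theorem and reading off the ring, and to handle the counting by elementary number theory, tying the two together through Lemma \ref{H4lem}.

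For the geometric content I would specialize Theorem \ref{admjoinse} to $M=S^{2r+1}$, regarded as the regular Sasaki manifold that is the circle bundle over $N=\bbc\bbp^r$ with its Fubini--Study K\"ahler--Einstein structure and primitive K\"ahler class; then $\cali_N=r+1$ and the relative Fano indices $(l_1,l_2)$ are precisely those of \eqref{sphjoinls}. Theorem \ref{admjoinse} supplies the Sasaki--Einstein metric, and simple connectivity is part of the topological description of these joins via \eqref{orbifibrationexactseq}. By \eqref{sphjoincohr} the cohomology ring is $\bbz[x,y]/(w_1w_2l_1(\bfw)^2x^2,x^{r+1},x^2y,y^2)$; for $r\geq 2$ the class $x^2$ generates $H^4$ subject only to $w_1w_2l_1(\bfw)^2x^2=0$, so $H^4\cong\bbz/(w_1w_2l_1(\bfw)^2)$, and for fixed $r$ the entire ring is determined by this single integer. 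Consequently two of these manifolds have isomorphic cohomology rings exactly when $w_1w_2l_1(\bfw)^2=w_1'w_2'l_1(\bfw')^2$.

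Next I would carry out the counting. Writing $W=w_1w_2=p_1^{a_1}\cdots p_k^{a_k}$, coprimality of $w_1,w_2$ forces each prime power $p_i^{a_i}$ to lie entirely in $w_1$ or entirely in $w_2$; there are $2^k$ such ordered assignments, and since $W>1$ none is fixed by the swap $(w_1,w_2)\mapsto(w_2,w_1)$, so there are exactly $2^{k-1}$ unordered coprime factorizations, afterwards ordered by $w_1>w_2$. This yields $P_k=2^{k-1}$. I would then invoke Lemma \ref{H4lem}: if $M_{\bfw'}$ has cohomology ring isomorphic to that of $M_\bfw$, then $w_1'w_2'=w_1w_2=W$ and $l_1(\bfw')=l_1(\bfw)$. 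Thus every manifold cohomology-equivalent to $M_\bfw$ is indexed by a coprime factorization of the same $W$, which already bounds the size of the family by $2^{k-1}$.

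The main obstacle is the reverse inclusion, namely upgrading this bound to the exact count $2^{k-1}$. Because $l_1(\bfw')=(r+1)/\gcd(w_1'+w_2',r+1)$ depends on the sum $w_1'+w_2'$ and not on the product $W$ alone, one must verify that all $2^{k-1}$ coprime factorizations of $W$ share the same value of $\gcd(w_1'+w_2',r+1)$, so that they genuinely realize one and the same ring. This is the delicate arithmetic point on which the cardinality hinges, and it is exactly the sensitivity of $l_1$ to the chosen decomposition that must be controlled; conversely, once it is in hand, distinct products $W$ produce pairwise non-isomorphic rings and hence the claimed countable infinity of homotopy types.
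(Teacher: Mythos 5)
Your route is the same as the paper's: existence and simple connectivity from Theorem \ref{admjoinse}, the ring from \eqref{sphjoincohr} so that for fixed $r\geq 2$ the whole ring is determined by the single integer $|H^4|=w_1w_2\,l_1(\bfw)^2$, the count $P_k=2^{k-1}$ of unordered coprime factorizations of $W=w_1w_2$, and Lemma \ref{H4lem} for the ``only if'' direction. The step you explicitly leave open --- that all $2^{k-1}$ coprime factorizations of $W$ produce the \emph{same} value of $l_1(\bfw)$, hence genuinely isomorphic rings --- is a real gap, and the paper's own proof does not close it either: it asserts that Lemma \ref{H4lem} gives the cardinality of $\calp_W$, but that lemma only yields the containment of $\calp_W$ in the set of coprime factorizations of $W$, i.e.\ an upper bound of $2^{k-1}$.

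Your worry is substantive, not cosmetic. Since $l_1(\bfw)=(r+1)/\gcd(w_1+w_2,r+1)$ depends on the sum rather than the product, constancy of $l_1$ over the factorizations of a fixed $W$ can fail for general $r$: take $r+1=16$ and $W=15$ (so $k=2$). The factorization $(15,1)$ has $\gcd(16,16)=16$, $l_1=1$ and $|H^4|=15$, while $(5,3)$ has $\gcd(8,16)=8$, $l_1=2$ and $|H^4|=60$; these rings are not isomorphic, so here $\calp_{15}$ has cardinality $1$, not $2^{k-1}=2$. Thus in full generality the proposition should be read as an upper bound, or restricted to the factorizations sharing a common $l_1$. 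In the case the paper actually uses, $r=2$, the gap is easily filled: for coprime $w_1,w_2$ one has $3\mid w_1+w_2$ if and only if $W\equiv 2\pmod 3$ (then neither $w_i$ is divisible by $3$ and they are $1$ and $2$ mod $3$), so $\gcd(w_1+w_2,3)$, and hence $l_1$, depends only on $W$, and every coprime factorization of $W$ does yield the same ring --- this is exactly the later split of $\calp_W$ into $\calp_W^0$ and $\calp_W^1$. You should add this verification (or the analogous statement for whatever $r$ you intend) to turn your upper bound into the claimed exact count.
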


Generally we can construct the join of any Sasaki-Einstein manifold with the standard $S^3$ to obtain new Sasaki-Einstein manifolds as done in \cite{BG00a} in a rather simple way; the base is a product K\"ahler-Einstein manifold. In the present paper we take the join with a weighted $S^3_\bfw$ and then deform in the Sasaki-cone. It was shown in \cite{BoTo14a} that there is a unique Sasaki-Einstein metric in the $\bfw$-Sasaki cone. From the topological viewpoint taking the join with the weighted $S^3_\bfw$ usually adds torsion. However, in the simplest example which occurs in dimension $5$ the differentials in the spectral sequence conspire to cancel the occurrence of torsion. Of course, in this dimension the only positive K\"ahler-Einstein $2$-manifold is $N=\bbc\bbp^1$ with its standard Fubini-Study K\"ahler-Einstein structure. Then our procedure gives the $5$-manifolds $Y^{p,q}$ discovered by the physicists \cite{GMSW04a} which are diffeomorphic to $S^2\times S^3$ for all relatively prime positive integers $p,q$ such that $1<q<p$. This case has been well studied from various perspectives \cite{Boy11,BoPa10,MaSp05b,CLPP05}. The $7$-dimensional case is quite amenable to further study, and we shall concentrate our efforts in this direction. However, before doing so we shall give some partial results for some examples in which the regular Sasaki manifold $M$ is complicated by the appearance of torsion. 

Note that if we replace the standard odd dimensional sphere by a  rational homology sphere $V^{2r+1}$ with a regular Sasakian structure the computations in \cite{BoTo14a} immediately give 

\begin{proposition}\label{ratcohring}
The rational cohomology ring of the $S^3_\bfw$-join $V^{2r+1}\star_{l_1,l_2,\bfw}S^3_\bfw$ of a rational homology sphere $V^{2r+1}$ is 
$$\bbq[x,y]/(x^2,y^2)$$
where $x,y$ are classes of degree $2$ and $2r+1$, respectively. Here the $l_1,l_2$ are any positive integers satisfying $\gcd(l_2,w_1w_2l_1)=1$.
\end{proposition}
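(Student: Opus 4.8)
The plan is to compute $H^*(\,\cdot\,;\bbq)$ from the Serre spectral sequence of the top row of the commutative diagram \eqref{orbifibrationexactseq}, namely the Borel fibration
$$V^{2r+1}\times S^3_\bfw \longrightarrow V^{2r+1}\star_{l_1,l_2,\bfw}S^3_\bfw \longrightarrow \mathsf{B}S^1,$$
reading off its differentials by naturality from the product fibration in the bottom row. The point is that over $\bbq$ a rational homology sphere behaves like an honest sphere: $H^*(V^{2r+1};\bbq)=\bbq\oplus\bbq\,v$ with $\deg v=2r+1$, and since the regular Reeb flow is free, its Gysin sequence forces $H^*(N;\bbq)=\bbq[e]/(e^{r+1})$ with $e\neq 0$, so $N$ is a rational $\bbc\bbp^r$. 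Likewise $S^3_\bfw$ is rationally $S^3$, and as the weighted Reeb action is locally free, $\bbc\bbp^1[\bfw]=S^3_\bfw/S^1$ is a rational $S^2$ with nonzero orbifold Euler class. Hence the rational $E_2$-page and all rational differentials coincide with those of the sphere case $M=S^{2r+1}$ from Theorem 4.5 of \cite{BoTo14a}, and the quickest route is simply to rationalize \eqref{sphjoincohr}: the coefficient $w_1w_2l_1(\bfw)^2$ becomes a unit, so $w_1w_2l_1^2x^2$ collapses to $x^2=0$, the relations $x^{r+1}$ and $x^2y$ become consequences of $x^2=0$, and only $y^2=0$ survives, giving $\bbq[x,y]/(x^2,y^2)$.

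To keep the argument self-contained I would run the spectral sequence directly. By K\"unneth the fiber has $\bbq$-basis $\{1,z,v,vz\}$ in degrees $\{0,3,2r+1,2r+4\}$, while the base contributes $\bbq[t]$ with $\deg t=2$. In the bottom product fibration the transgressions split off the two factors: $d_4(z)=\tilde e\,s_2^2$ with $\tilde e\neq 0$ (the nonzero Euler class of $S^3_\bfw\to\bbc\bbp^1[\bfw]$) and $d_{2r+2}(v)=c\,s_1^{r+1}$ with $c\neq 0$ (reflecting $e^{r+1}=0$ in $N$). The vertical map $\psi$ is induced by the inclusion of the join circle into $S^1\times S^1$, so that $\psi^*s_1=l_2 t$ and $\psi^*s_2=l_1 t$; by naturality the top fibration then has $d_4(z)=\tilde e\,l_1^2\,t^2\neq 0$, which is the only differential I will need.

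Once $d_4(z)=\tilde e\,l_1^2 t^2\neq 0$, the differential $d_4$ kills $t^2$ and, through $z,zt,zt^2,\dots$, every higher power of $t$, and through $vz,vzt,\dots$ it kills $vt^2,vt^3,\dots$; in particular $t^{r+1}=0$ already on $E_5$, so the later transgression $d_{2r+2}(v)=c\,l_2^{r+1}t^{r+1}$ is forced to vanish on $E_{2r+2}$ and $v$ survives. For $r\geq 2$ a degree count shows there is no room for any further differential, so $E_5=E_\infty$ has $\bbq$-basis $\{1,t,v,vt\}$ in degrees $\{0,2,2r+1,2r+3\}$. Putting $x=t$ and $y=v$ yields the additive structure, and for the multiplication $x^2=0$ because $H^4=0$, $y^2=0$ because $\deg y$ is odd, and $xy$ is the nonzero top class; thus $H^*\cong\bbq[x,y]/(x^2,y^2)$. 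The hypothesis $\gcd(l_2,w_1w_2l_1)=1$ enters only to guarantee that the join is a smooth manifold on which the Borel construction computes cohomology; it does not affect the rational answer, which is independent of $\bfw$ and of $l_1,l_2$.

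The main obstacle is pinning down the transgression $d_4(z)$ over $\bbq$, which rests on two checks: that the locally free weighted action on $S^3_\bfw$ really has nonzero orbifold Euler class, so that $\bbc\bbp^1[\bfw]$ is rationally $S^2$ and contributes no spurious classes; and that the naturality identification $\psi^*s_2=l_1 t$ with $l_1\geq 1$ is correct, so that this Euler class is not annihilated by the pullback. Finally, the degenerate case $r=1$, where $\deg v=\deg z=3$ and both classes transgress at $d_4$, must be treated by a separate rank count: in each relevant bidegree the plane spanned by $v,z$ (respectively $vt,zt$) maps onto a line with one-dimensional kernel, which again leaves a single surviving class in each of degrees $0,2,3,5$ and recovers the rational cohomology of $S^2\times S^3$.
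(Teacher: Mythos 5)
Your argument is correct and is essentially the paper's intended proof: the paper simply asserts that the computations of \cite{BoTo14a} carry over verbatim because $V^{2r+1}$ and $S^3_\bfw$ are rational spheres, and your spectral-sequence computation (with the differentials pulled back via $\psi^*s_1=l_2t$, $\psi^*s_2=l_1t$ from the bottom row of diagram \eqref{orbifibrationexactseq}) is exactly that computation made explicit over $\bbq$. The extra care you take with the transgression $d_4(z)$ and the degenerate case $r=1$ only adds detail the paper leaves implicit.
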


Examples of rational homology spheres with regular Sasaki-Einstein metrics are given in \cite{BGN02a}. They are the Sasakian homogeneous Stiefel manifolds $V_2(\bbr^{2n+1})$ of 2-frames in $\bbr^{2n+1}$ and the 3-Sasakian homogeneous 11-manifold\footnote{The reason for the subscript $+$ on $Sp(1)$ is that there are two non-conjugate $Sp(1)$ subgroups in the exceptional Lie group $G_2$ which we denote by the subscripts $\pm$. The $Sp(1)_-$ is equivalent to $V_2(\bbr^7)$.} $G_2/Sp(1)_+$. Since we want the join to have a Sasaki-Einstein metric somewhere it the Sasaki cone, we require that the pair $(l_1,l_2)$ to be the relative Fano indices of Lemma \ref{c10}. 

\begin{example}\label{Stiefel}
The Stiefel manifold $V_2(\bbr^{2n+1})$ of dimension $4n-1$. It is a circle bundle over the odd complex quadric $Q_{2n-1}(\bbc)$. Its Fano index $\cali$ is $2n-1$. So the relative Fano indices are 
\begin{equation}\label{VrelF}
l_1(\bfw)=\frac{2n-1}{\gcd(|\bfw|,2n-1)},\qquad l_2(\bfw)=\frac{|\bfw|}{\gcd(|\bfw|,2n-1)}.
\end{equation}
Moreover, the cohomology of $V_2(\bbr^{2n+1})$ is 
$$H^p(V_2(\bbr^{2n+1}),\bbz)\approx \begin{cases}
                                                             \bbz &\text{if $p=0,4n-1$;} \\
                                                          \bbz_2&\text{if $p=2n$;} \\ 
                                                                  0 &\text{otherwise.}
                                                                  \end{cases}$$ 
From the long exact homotopy sequence and the commutative diagram one easily obtains the partial results for the join $M_{l_1,l_2,\bfw}(V)=V_2(\bbr^{2n+1})\star_{l_1,l_2}S^3_\bfw$ when $n>2$, namely $M_{l_1,l_2,\bfw}(V)$ is simply connected, $H^3(M_{l_1,l_2,\bfw}(V),\bbz)=H^5(M_{l_1,l_2,\bfw}(V),\bbz)=0$, and 
\begin{equation}\label{Stjoincoh}
\pi_1(M_{l_1,l_2,\bfw}(V))=H^2(M_{l_1,l_2,\bfw}(V),\bbz)\approx \bbz, \quad H^4(M_{l_1,l_2,\bfw}(V),\bbz)\approx \bbz_{w_1w_2l_1^2}.
\end{equation} 
Since the Stiefel manifolds  $V_2(\bbr^{2n+1})$ are $S^1$-bundles over a complex quadric, they are special cases of the next example.                                                            
\end{example}

\begin{example}\label{Ferex} The Sasakian circle bundle $\cals_{d,n+1}$ over a Fermat hypersurface \cite{BG00a}. The  Fermat hypersurface $F_{d,n+1}$ of degree $d$ in $\bbc\bbp^{n+1}$ is described by the hypersurface
\begin{equation}\label{Fhyp}
z_0^d+z_1^d+\cdots +z_{n+1}^d=0.
\end{equation}
It is Fano when $d\leq n+1$ with index $\cali_{F_{d,n+1}}=n+2-d$ when $d\leq n+1$. Moreover, they have a K\"ahler-Einstein metric when $\frac{n+1}{2}\leq d\leq n+1$. Note that $F_{2,2n}$ is the complex quadric $Q_{2n-1}\subset \bbc\bbp^{2n}$ and $\cals_{2,2n}=V_2(\bbr^{2n+1})$ described in Example \ref{Stiefel}. The integral cohomology ring of $F_{d,n+1}$ is well understood \cite{KuWo80}. It is torsion free with $H_*(F_{d,n+1},\bbz)=H_*(\bbp^n,\bbz)$ except in the middle dimension $n$ where the $nth$ cohomology group of $F_{d,n+1}$ is $\bbz^{b_n}$ when $n$ is odd, and $\bbz^{b_n+1}$ when $n$ is even, where
$$b_n=(-1)^n\Bigl(1+{(1-d)^{n+2}-1\over d}\Bigr).$$
Then if $n>4$ we see as in Example \ref{Stiefel} that the join $M_{l_1,l_2,\bfw}=\cals_{d,n+1}\star_{l_1,l_2}S^3_\bfw$ is simply connected satisfying the conditions of Equation \eqref{Stjoincoh}. In order that the join has a Sasaki-Einstein metric in its $\bfw$-Sasaki cone, we must choose the relative Fano indices to be
\begin{equation}\label{FrelF}
l_1(\bfw)=\frac{n+2-d}{\gcd(|\bfw|,n+2-d)},\qquad l_2(\bfw)=\frac{|\bfw|}{\gcd(|\bfw|,n+2-d)}
\end{equation}
with $\frac{n+1}{2}\leq d\leq n+1$ or $d=2$. For $2<d<\frac{n+1}{2}$ it is unknown whether there is such an SE metric.

\end{example}

\begin{example}\label{G2ex} The homogeneous $3$-Sasakian 11-manifold $G_2/Sp(1)_+$. By Proposition 2.3 in \cite{BG00a} the Fano index $\cali$ associated to $G_2/Sp(1)_+$ is $3$.

\end{example}

\subsection{Examples in Dimension $7$}
We focus attention to dimension seven in which case $N$ is a del Pezzo Surface, namely  $\bbc\bbp^2,\bbc\bbp^1\times \bbc\bbp^1$, and $\bbc\bbp^2$ blown-up at $k$ generic points with $1\leq k\leq 8$. Then the $S^3_\bfw$-join a Sasakian circle bundle over $N$ will be a Sasaki 7-manifold.

\subsubsection{$M=S^5, N=\bbc\bbp^2$}\label{Ncp2}
This is a special case of Theorem \ref{setop}. For $\bbc\bbp^2$ with its standard Fubini-Study K\"ahlerian structure, we have $\cali_N=3$. From Example \ref{Findex} we see that we have a regular Reeb vector field in the $\bfw$-Sasaki cone in precisely two cases, either $\bfw=(2,1)$, or $\bfw=(5,1)$. In the first case the relative Fano indices are $(l_1,l_2)=(1,1)$ while in the second case they are $(l_1,l_2)=(1,2)$. In the former case our 7-manifold $M^7_{(2,1)}=S^5\star_{1,1}S^3_{(2,1)}$ is an $S^3$-bundle over $\bbc\bbp^2$; whereas, in the latter case the 7-manifold $M^7_{(5,1)}=S^5\star_{1,2}S^3_{(5,1)}$ is an $L(2;5,1)$ bundle over $\bbc\bbp^2$. Moreover, it follows from standard lens space theory that $L(2;5,1)$ is diffeomorphic to the real projective space $\bbr\bbp^3$. For general $\bfw$ we have

\begin{proposition}\label{topcp2}
Let $M^7_{\bfw}$ be a simply connected 7-manifold of Theorem \ref{setop}. There are two cases:
\begin{enumerate}
\item $3$ divides $|\bfw|$ which implies $l_2=\frac{|\bfw|}{3}$ and $l_1=1$.
\item $3$ does not divide $|\bfw|$ in which case $l_2=|\bfw|$ and $l_1=3$.
\end{enumerate}
In both cases the cohomology ring is given by
$$\bbz[x,y]/(w_1w_2l_1^2x^2,x^3,x^2y,y^2)$$
where $x,y$ are classes of degree $2$ and $5$, respectively.
\end{proposition}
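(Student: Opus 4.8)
The plan is to obtain Proposition \ref{topcp2} as the $r=2$ specialization of the general sphere-join results already recorded in the excerpt. First I would fix the base geometry. With $N=\bbc\bbp^2$ carrying its Fubini--Study K\"ahler--Einstein structure, the primitive class $[\gro_N]$ generates $H^2(\bbc\bbp^2,\bbz)\cong\bbz$ and $c_1(\bbc\bbp^2)=3[\gro_N]$, so the Fano index is $\cali_N=3$. Note also that the hypothesis $M=S^5=S^{2r+1}$ forces $r=2$, which is consistent with the general identity $\cali_N=r+1=3$ for the sphere case, so $M^7_\bfw$ is exactly the join $S^{2r+1}\star_{l_1,l_2}S^3_\bfw$ with $r=2$ to which Theorem 4.5 of \cite{BoTo14a} applies.

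Second, I would read off the two cases directly from Lemma \ref{c10}. Substituting $\cali_N=3$ gives
$$l_1=\frac{3}{\gcd(|\bfw|,3)},\qquad l_2=\frac{|\bfw|}{\gcd(|\bfw|,3)},$$
where $|\bfw|=w_1+w_2$. The only possible values of $\gcd(|\bfw|,3)$ are $3$ (exactly when $3\mid|\bfw|$) and $1$ (otherwise), producing respectively $(l_1,l_2)=(1,|\bfw|/3)$ and $(l_1,l_2)=(3,|\bfw|)$. This is precisely the dichotomy asserted in items (1) and (2), and it agrees with the relative Fano indices recorded in Equation \eqref{sphjoinls} upon setting $r=2$.

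Third, for the cohomology ring I would simply invoke the general formula \eqref{sphjoincohr}, valid for every sphere join $M^{2r+3}_\bfw=S^{2r+1}\star_{l_1,l_2}S^3_\bfw$. Setting $r=2$ there turns $x^{r+1}$ into $x^3$ and places $y$ in degree $2r+1=5$, yielding
$$\bbz[x,y]/(w_1w_2l_1(\bfw)^2x^2,\,x^3,\,x^2y,\,y^2),$$
which is exactly the stated ring.

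In short, the entire statement is a direct specialization and there is no genuine obstacle beyond bookkeeping; simple connectivity itself comes from the ambient Theorem \ref{setop}. The only input external to the cited results is the classical computation $\cali_{\bbc\bbp^2}=3$, and the single point worth double-checking is that the normalization hypotheses of Theorem 4.5 of \cite{BoTo14a} (in particular the relative-Fano-index condition, equivalently $\gcd(l_2,w_1w_2l_1)=1$) hold in both cases --- which they do, since the $(l_1,l_2)$ computed above are precisely those prescribed by Lemma \ref{c10}.
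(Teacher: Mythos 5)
Your proposal is correct and follows essentially the same route as the paper, which states Proposition \ref{topcp2} without a separate proof precisely because it is the $r=2$ specialization of the ring \eqref{sphjoincohr} (Theorem 4.5 of \cite{BoTo14a}) combined with Lemma \ref{c10} applied with $\cali_{\bbc\bbp^2}=3$. Your case split on $\gcd(|\bfw|,3)$ and your check of the relative-Fano-index normalization are exactly the intended bookkeeping.
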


Notice that in case (1) of Proposition \ref{topcp2} $H^4(M^7_\bfw,\bbz)=\bbz_{w_1w_2}$, whereas in case (2) we have $H^4(M^7_{\bfw},\bbz)=\bbz_{9w_1w_2}$. Since $3$ must divide $w_1+w_2$ in the first case and $w_1w_2$ are relatively prime, their cohomology rings are never isomorphic.

\begin{remark}
Let us make a brief remark about the homogeneous case $\bfw=(1,1)$ with symmetry group $SU(3)\times SU(2)\times U(1)$. There is a unique solution with a Sasaki-Einstein metric as shown in \cite{BG00a}. However, dropping both the Einstein and Sasakian conditions, Kreck and Stolz \cite{KS88} gave a diffeomorphism and homeomorphism classification. Furthermore, using the results of \cite{WaZi90}, they show that in certain cases each of the 28 diffeomorphism types admits an Einstein metric. If we drop the Einstein condition and allow contact bundles with non-trivial $c_1$ we can apply the classification results of \cite{KS88} to the Sasakian case. This will be studied elsewhere. 
\end{remark}

For dimension 7 we see from Proposition \ref{topcp2} that if $3$ divides $w_1+w_2$ then the order $|H^4|$ is $W$. However, if $3$ does not divide $w_1+w_2$ then the order of $|H^4|$ is $9W$. So by Lemma \ref{H4lem} $\calp_W$ splits into two cases, $\calp_W^0$ if $W+1$ is divisible by $3$, and $\calp_W^1$ if $W+1$ is not divisible by $3$. Of course, in either case the cardenality of $\calp_W$ is $2^{k-1}$ where $k$ is the number of prime powers in the prime decomposition of $W$.

\begin{proposition}\label{homequivprop}
Suppose the order of $H^4$ is odd. The elements $M^7_\bfw$ and $M^7_{\bfw'}$ in $\calp_W^0$ are homotopy inequivalent if and only if either 
\begin{equation*}
\bigl(\frac{w'_1+w'_2}{3}\bigr)^3\equiv \pm\bigl(\frac{w_1+w_2}{3}\bigr)^3 \mod \bbz_{W}.
\end{equation*}
The elements $M^7_\bfw$ and $M^7_{\bfw'}$ in $\calp_W^1$ are homotopy inequivalent if and only if  
\begin{equation*}
(w'_1+w'_2)^3 \equiv\pm (w_1+w_2)^3 \mod \bbz_{9W}.
\end{equation*}
\end{proposition}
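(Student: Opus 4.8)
The plan is to reduce the homotopy classification to the known invariants for simply connected generalised Witten manifolds and then to compute those invariants explicitly for the spaces $M^7_\bfw$. Recall from Proposition \ref{topcp2} that each $M^7_\bfw = S^5\star_{l_1,l_2}S^3_\bfw$ is a simply connected closed $7$-manifold with $H^2\cong\bbz$ generated by a class $x$, with $H^4\cong\bbz_{w_1w_2l_1^2}$ generated by $x^2$, and with $H^5\cong\bbz$; these are precisely the generalised Witten spaces whose homotopy type was classified by Kruggel \cite{Kru97} (and whose homeomorphism and diffeomorphism type by Escher \cite{Esc05}). I would first invoke Kruggel's classification in the following form: when $m:=|H^4(M)|$ is odd, the oriented homotopy type of such an $M$ is determined by $m$ together with a single residue $\tau(M)\in\bbz_m$, a cubic Kreck--Stolz-type linking invariant, and two such manifolds are (unoriented) homotopy equivalent if and only if their orders $m$ agree and $\tau(M')\equiv\pm\,\tau(M)\bmod m$. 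The sign ambiguity is exactly the indeterminacy coming from $\mathrm{Aut}(H^2)\times\{\text{orientations}\}=\{\pm1\}\times\{\pm1\}$; because $H^2\cong\bbz$ the only available units are $\pm1$, so the usual ``cube of a unit'' freedom of the lens-space criterion collapses to a single sign. This is also where the odd-order hypothesis enters: for even $m$ a $\bbz/2$-refinement of the linking form (equivalently the Kreck--Stolz spin invariants $s_2,s_3$ of \cite{KS88}) would be needed and $\tau$ alone would not suffice.

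With the classification in place, the next step is bookkeeping that is already available. By Lemma \ref{H4lem} two spaces $M^7_\bfw,M^7_{\bfw'}$ lie in the same class $\calp_W$ exactly when $w_1w_2=w'_1w'_2$ and $l_1(\bfw)=l_1(\bfw')$, and Proposition \ref{topcp2} together with the discussion preceding the statement shows that this forces the orders $m=w_1w_2l_1^2$ to coincide and splits $\calp_W$ into the two families $\calp_W^0$ (here $3\mid|\bfw|$, so $l_1=1$ and $m=W$) and $\calp_W^1$ (here $3\nmid|\bfw|$, so $l_1=3$ and $m=9W$). Hence within each $\calp_W^i$ the order condition is automatic, and homotopy equivalence is governed solely by the congruence $\tau(\bfw')\equiv\pm\,\tau(\bfw)\bmod m$.

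The crux, and the step I expect to be the main obstacle, is the explicit computation $\tau(M^7_\bfw)\equiv\pm\,l_2(\bfw)^3\bmod m$. I would obtain this from the join description and the spectral-sequence data of diagram \eqref{orbifibrationexactseq} used in \cite{BoTo14a}: the transgression identifies the torsion generator of $H^4$ and pins down the self-linking $\mathrm{lk}(x^2,x^2)$ in terms of $w_1,w_2,l_1,l_2$, while the cubic dependence $l_2^3$ emerges from tracking the degree-$2$ generator $x$ (pulled back from $\mathsf{B}S^1$) through the ring structure into the Kruggel invariant, exactly as for the Aloff--Wallach and Eschenburg spaces. The delicate points are (i) fixing the precise normalisation and sign of $\tau$ so that the indeterminacy is genuinely only $\pm$, and (ii) verifying that the $p_1$-dependent part of the Kreck--Stolz invariant either drops out or contributes only the claimed cube after reduction mod $m$ — this is where I would lean on Kruggel's identification of the homotopy-invariant combination and on the oddness of $m$. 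Granting $\tau=\pm\,l_2^3$, substituting $l_2=(w_1+w_2)/\gcd(|\bfw|,3)$ and $m=w_1w_2l_1^2$ in the two cases yields $\pm\bigl(\frac{w_1+w_2}{3}\bigr)^3\bmod W$ on $\calp_W^0$ and $\pm(w_1+w_2)^3\bmod 9W$ on $\calp_W^1$, which are exactly the congruences appearing in the statement.
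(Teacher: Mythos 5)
Your proposal follows essentially the same route as the paper: both reduce the question to Kruggel's Theorem 5.1 for generalized Witten spaces with $|H^4|$ odd, identify the cubic homotopy invariant as $l_2(\bfw)^3\in\bbz^*_{l_1^2W}/\{\pm 1\}$ (with the $\pm$ coming from the choice of generators), and then substitute $l_2=(w_1+w_2)/\gcd(|\bfw|,3)$ in the two cases $\calp_W^0$ and $\calp_W^1$. The only difference is that where you sketch a linking-form computation to establish $\tau=\pm l_2^3$, the paper reads this off directly from the $E_6$ differential $d_6(\grb)=l_2(\bfw)^3s^3$ in the spectral sequence of Theorem 4.5 of \cite{BoTo14a}.
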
      

\begin{proof}
For $r=2$ consider the $E_6$ differential $d_6(\grb)=l_2(\bfw)^3s^3$ in the spectral sequence of Thorem 4.5 of  \cite{BoTo14a}. Since $l_2$ is relatively prime to $l_1(\bfw)^2w_1w_2$, this takes values in the multiplicative group $\bbz_{l_1^2W}^*$ of units in $\bbz_{l_1^2W}$. Taking into account the choice of generators, it takes its values in $\bbz_{l_1^2W}^*/\{\pm 1\}$. According to Theorem 5.1 of \cite{Kru97} $M^7_\bfw,M^7_{\bfw'}\in\calp_W$ are homotopy equivalent if and only if $l_2(\bfw')^3= l_2(\bfw)^3$ in $\bbz^*_{l_1^2W}/\{\pm 1\}$. Of course, this means that $l_2(\bfw')^3=\pm l_2(\bfw)^3$ in $\bbz^*_{l_1^2W}$. Note that the the other two conditions of Theorem 5.1 of \cite{Kru97} are automatically satisfied in our case.
\end{proof}

Using a Maple program we have checked some examples for homotopy equivalence which appears to be quite sparse. So far we haven't found any examples of a homotopy equivalence. However, we have not done a systematic computer search which we leave for future work.

\begin{example}\label{ex1} Our first example is an infinite sequence of pairs with the same cohomology ring. Set $W=3p$ with $p$ an odd prime not equal to $3$, which gives $P_k=2$. Then for each odd prime $p\neq 3$ there are  two manifolds in $\calp_W^1$, namely $M^7_{(3p,1)}$ and $M^7_{(p,3)}$. The order of $H^4$ is $27p$. We check the conditions of Proposition \ref{homequivprop}. We find
$$(3p+1)^3\equiv 9p+1 \mod 27p, \qquad (p+3)^3\equiv p^3+9p^2+27 \mod 27p.$$
First we look for integer solutions of $p^3+9p^2-9p+26\equiv 0 \mod 27p.$ By the rational root test the solutions could only be $p=2,13,26$ none of which are solutions. Next we check the second condition of Proposition \ref{homequivprop}, namely, 
$p^3+9p^2+9p+28\equiv 0 \mod 27p.$
Again by the rational root test we find the only possibilities are $p=2,7,14,28$, from which s we see that there are no solutions. Thus, we see that $M^7_{(3p,1)}$ and $M^7_{(p,3)}$ are not homotopy equivalent for any odd $p\neq 3$.

By the same arguments one can also show that the infinite sequence of pairs of the form $M^7_{(9p,1)}$ and $M^7_{(p,9)}$, with $p$ an odd prime relatively prime to $3$, are never homotopy equivalent.
\end{example}

\begin{remark}\label{pairrem}
In Example \ref{ex1} we do not need to have $p$ a prime, but we do need it to be relatively prime to $3$. In this more general case, there will be more elements in $\calp_W^1$. For example, if $p=55$ we have $P_k=4$ and the pair $(M^7_{(165,1)},M^7_{(55,3)}$ has the same cohomology ring as $M^7_{(33,5)}$ and $M^7_{(15,11)}$. However, they are not homotopy equivalent to either member of the pair nor to each other.
\end{remark}

\begin{example}\label{ex2} A somewhat more involved example is obtained by setting $W=5\cdot 7\cdot 11\cdot 17$. Here $P_k=8$, so this gives eight 7-manifolds in $\calp_W^0$, namely, 
$$M^7_{(6545,1)},M^7_{(1309,5)},M^7_{(935,7)},M^7_{(595,11)},M^7_{(385,17)},M^7_{(187,35)},M^7_{(119,55)},,M^7_{(85,77)}. $$
One can check that these do not satisfy the conditions for homotopy equivalence of Proposition \ref{homequivprop}. So they are all homotopy inequivalent.
\end{example}

It is easy to get a necessary condition for homeomorphism.

\begin{proposition}
Suppose $w_1'w_2'=w_1w_2$ is odd and that $M^7_\bfw$ and $M^7_{\bfw'}$ are homeomorphic. Then in addition to the conditions of Proposition \ref{homequivprop}, we must have 
$$2(w'_1+w'_2)^2\equiv 2(w_1+w_2)^2 \mod 3w_1w_2.$$
\end{proposition}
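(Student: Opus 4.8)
The plan is to extract the homeomorphism obstruction from the Kreck--Stolz invariants, which refine the homotopy invariant of Proposition \ref{homequivprop} by incorporating the first Pontrjagin class (noted in the Acknowledgements to be a homeomorphism invariant here). The congruence to be proved involves $2(w_1+w_2)^2 \bmod 3w_1w_2$, so the shape of the answer strongly suggests that the relevant Pontrjagin-type term contributes a quadratic expression in $|\bfw|$, whereas the cubic homotopy datum $l_2(\bfw)^3$ already accounts for the conditions of Proposition \ref{homequivprop}. Since $M^7_\bfw$ is a generalized Witten space, I would invoke the homeomorphism classification of \cite{Esc05}, which lists finitely many $\bbz$-valued and $\bbq/\bbz$-valued invariants; a homeomorphism forces equality of all of them. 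Equality of the $\bbq/\bbz$-valued cubic invariant reproduces the homotopy condition, and equality of a second, lower-order invariant must yield the new congruence.

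The key steps, in order, are as follows. First I would recall the explicit form of the spectral sequence differentials of Theorem 4.5 of \cite{BoTo14a} as used in the proof of Proposition \ref{homequivprop}, and identify the generator $s$ of $H^2$ together with the differential $d_6(\beta)=l_2(\bfw)^3 s^3$. Second, I would compute the first Pontrjagin class $p_1(M^7_\bfw)$ in terms of $s$: because the manifold is an $S^3_\bfw$-join over $\bbc\bbp^2$ (so $\cali_N=3$), the tangent bundle splits through the orbifold fibration, and $p_1$ should come out as a specific multiple of $s^2$ determined by the weights $w_1,w_2$ and the index data, contributing a factor involving $(w_1+w_2)^2$. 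Third, I would feed $p_1$ into the Kreck--Stolz (equivalently Escher's) secondary invariant; the mod-$3w_1w_2$ reduction arises because $H^4$ has order $w_1w_2l_1^2$ and, under the oddness hypothesis $w_1w_2$ odd with $l_1=3$ (the case where $3\nmid|\bfw|$, so $|H^4|=9w_1w_2$ is odd), the Pontrjagin contribution lands naturally in $\bbz_{3w_1w_2}$. Matching this invariant for $\bfw$ and $\bfw'$ then gives $2(w'_1+w'_2)^2\equiv 2(w_1+w_2)^2 \bmod 3w_1w_2$, the factor $2$ being the standard normalization appearing in the $s_1$-type Kreck--Stolz invariant.

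The main obstacle will be step two: pinning down the exact coefficient of $s^2$ in $p_1(M^7_\bfw)$, including the correct normalization constant, since this requires tracking the Pontrjagin class through both the join construction and the orbifold classifying-space diagram \eqref{orbifibrationexactseq}, and the branch divisors introduced by the weighting $\bfw$ can shift the coefficient. I would handle this by computing $p_1$ rationally first (where the join of $S^5$ with $S^3_\bfw$ over $\bbc\bbp^2$ makes the rational tangent bundle transparent) and then reducing modulo the torsion order $3w_1w_2$, using that $w_1w_2$ is coprime to $3$ in the relevant case to separate the two prime contributions. A secondary subtlety is verifying that only this one additional congruence, and not further Kreck--Stolz terms, is needed; I expect the oddness hypothesis to kill the remaining $2$-primary invariants, leaving precisely the stated condition as a \emph{necessary} (not claimed sufficient) criterion, consistent with the proposition asserting only ``we must have.''
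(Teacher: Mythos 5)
Your overall strategy is the right one and is in fact the paper's: the proposition follows from the homeomorphism invariance of the first Pontrjagin class $p_1$ (the folklore result credited to Kreck in the acknowledgements) together with an explicit formula for $p_1(M^7_\bfw)$ as an element of the finite group $H^4(M^7_\bfw,\bbz)$. But your proposal has a genuine gap exactly at the point you flag as ``the main obstacle'': the coefficient of $s^2$ in $p_1$ is never actually computed, and you only infer its expected shape (``a factor involving $(w_1+w_2)^2$'') from the congruence you are trying to prove, which is circular. The paper does not recompute $p_1$ through the join at all; it simply quotes Kruggel \cite{Kru97}: if $3\nmid|\bfw|$ then $p_1(M^7_\bfw)\equiv 3|\bfw|^2-9w_1^2-9w_2^2\equiv -6|\bfw|^2 \bmod 9w_1w_2$, and if $3\mid|\bfw|$ then $p_1(M^7_\bfw)\equiv -6(|\bfw|/3)^2 \bmod w_1w_2$. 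From there the stated congruence is elementary arithmetic (divide the first congruence by $3$, multiply the second by $3$, to land in $\bbz_{3w_1w_2}$ in both cases). Without that input, or an honest derivation of it, your argument does not close.

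Two further points. First, the detour through the Kreck--Stolz/Escher invariants is both unnecessary and at odds with the paper, which explicitly defers the $s_1,s_2,s_3$ computations to future work precisely because they are complicated; the proof needs only the class $p_1\in H^4$ itself, not any secondary $\bbq/\bbz$-valued refinement of it. Second, your discussion covers only the case $l_1=3$ (i.e.\ $3\nmid|\bfw|$, $|H^4|=9w_1w_2$); the case $3\mid|\bfw|$, where $l_1=1$ and $|H^4|=w_1w_2$, must be treated separately, since there the natural modulus for $p_1$ is $w_1w_2$ and one has to check that the resulting condition still implies the stated congruence modulo $3w_1w_2$ (it does, by clearing the denominator $9$ in $-6(|\bfw|/3)^2$).
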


\begin{proof}
This is because the first Pontrjagin class $p_1$ is actually a homeomorphism invariant\footnote{This appears to be a folklore result with no proof anywhere in the literature. It is stated without proof on page 2828 of \cite{Kru97} and on page 31 of \cite{KrLu05}. We thank Matthias Kreck for providing us with a proof that $p_1$ is a homeomorphism invariant.}. From Kruggel \cite{Kru97} we see that if $3$ does not divide $|\bfw|$
\begin{equation}\label{p1}
p_1(M^7_\bfw)\equiv 3|\bfw|^2-9w_1^2-9w_2^2\equiv -6|\bfw|^2 \mod 9w_1w_2,
\end{equation}
which implies the result in this case. If $3$ divides $|\bfw|$ we have 
\begin{equation}\label{p1'}
p_1(M^7_\bfw)\equiv -6\Bigl(\frac{|\bfw|}{3}\Bigr)^2 \mod w_1w_2
\end{equation}
and this implies the same result.
\end{proof}

Note that Equations \eqref{p1} and \eqref{p1'} both imply the third condition of Theorem 5.1 in \cite{Kru97} holds in our case. To determine a full homeomorphism and diffeomorphism classification requires the Kreck-Stolz invariants \cite{KS88} $s_1,s_2,s_3\in \bbq/\bbz$. These can be determined as functions of $\bfw$ in our case by using the formulae in \cite{Esc05,Kru05}; however, they are quite complicated and the classification requires computer programing which we leave for future work.

It is interesting to compare the Sasaki-Einstein 7-manifolds described by Theorem \ref{setop} with the 3-Sasakian 7-manifolds studied in \cite{BGM94,BG99} for their cohomology rings have the same form. Seven dimensional manifolds whose cohomology rings are of this type were called 7-manifolds of type $r$ in \cite{Kru97} where $r$ is the order of $H^4$.  First recall that the 3-Sasakian 7-manifolds in \cite{BGM94} are given by a triple of pairwise relatively prime positive integers $(p_1,p_2,p_3)$ and $H^4$ is isomorphic to $\bbz_{\grs_2(\bfp)}$ where $\grs_2(\bfp)=p_1p_2+p_1p_3+p_2p_3$ is the second elementary symmetric function of $\bfp=(p_1,p_2,p_3)$. It follows that $\grs_2$ is odd. 
The following theorem is implicit in \cite{Kru97}, but we give its simple proof here for completeness.

\begin{theorem}\label{pwnothomequiv}
The 7-manifolds $M^7_\bfp$ and $M^7_\bfw$ are not homotopy equivalent for any admissible $\bfp$ or $\bfw$.
\end{theorem}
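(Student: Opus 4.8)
The plan is to treat both $M^7_\bfp$ and $M^7_\bfw$ as instances of Kruggel's \emph{$7$-manifolds of type $r$} and to apply his homotopy classification (Theorem 5.1 of \cite{Kru97}) to a putative equivalence. First I would record the obvious necessary condition: a homotopy equivalence induces an isomorphism of cohomology rings, so the orders of $H^4$ must agree, giving $\sigma_2(\bfp)=w_1w_2l_1(\bfw)^2=:r$, an odd integer. It is important to notice that this coincidence really does occur — for instance both $M^7_{(5,1)}$ and the $3$-Sasakian space with $\bfp=(1,1,2)$ have $r=5$ — so the order of $H^4$ alone cannot separate the two families, and the finer invariants of Theorem 5.1 are genuinely needed.

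Next I would assemble, for each family, the three invariants entering Kruggel's criterion. On the Sasaki-Einstein side these are already available: the cubic (linking) invariant is the image $l_2(\bfw)^3\in\bbz_r^*/\{\pm1\}$ of the $d_6$-differential used in Proposition \ref{homequivprop}, while the two remaining conditions are the ones shown to be automatic there, the last of them governed by the Pontryagin congruence $p_1(M^7_\bfw)\equiv -6|\bfw|^2$ recorded above. For the $3$-Sasakian manifolds I would run the identical spectral-sequence computation of \cite{BoTo14a} on the defining $S^1$-reduction that produces $M^7_\bfp$, expressing each of the three invariants as a symmetric function of $\bfp$ modulo $\sigma_2(\bfp)$ (so that the cubic invariant becomes a unit built from $\sigma_3(\bfp)=p_1p_2p_3$, and the analogue of the Pontryagin congruence becomes the corresponding expression in $\sigma_1(\bfp),\sigma_3(\bfp)$ taken from \cite{BGM94}).

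Finally I would compare the two lists and show that at least one invariant is incompatible for every admissible $\bfp$ and $\bfw$, forcing the homotopy equivalence to fail. The main obstacle is that the cubic invariant by itself does not suffice: whenever $3\nmid\varphi(r)$ cubing is a bijection of $\bbz_r^*$, so the Sasaki-Einstein value $l_2(\bfw)^3$ can meet an arbitrary unit (this is exactly what happens for $r=5$). The decisive separation must therefore come from one of the two auxiliary conditions — concretely from the Pontryagin-type congruence, which is forced to hold within the Sasaki-Einstein family but takes an incompatible value on the $3$-Sasakian side. Reducing this to an elementary, exception-free statement about the symmetric functions of a pairwise coprime triple $\bfp$ versus the product $w_1w_2l_1(\bfw)^2$ together with the parity and divisibility of $|\bfw|$ is the heart of the argument, and verifying that no coincidence survives for \emph{any} pair is the step I expect to require the most care.
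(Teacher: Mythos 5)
Your proposal does not actually contain a proof: the step you yourself identify as ``the heart of the argument'' --- showing that for \emph{every} admissible $\bfp$ and $\bfw$ with $\sigma_2(\bfp)=w_1w_2l_1(\bfw)^2$ at least one of Kruggel's three invariants disagrees --- is never carried out, only described as the place where the work would happen. As it stands this is a plan for a case analysis, not an argument, and it is not clear the plan would close: the cubic invariant admittedly fails to separate the families when cubing is a bijection on $\bbz_r^*$, and the congruence on $p_1$ that you propose to fall back on is a priori a homeomorphism invariant, not a homotopy invariant (this is exactly the distinction the paper is careful about in the discussion following Proposition \ref{homequivprop}), so you would first have to justify that the relevant residue of $p_1$ enters Kruggel's \emph{homotopy} classification at all. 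A further soft spot is the claim that you can ``run the identical spectral-sequence computation'' of \cite{BoTo14a} for $M^7_\bfp$: those manifolds are not joins, they are free $S^1$-quotients of $SU(3)$, so the fibration and hence the differentials are different and the invariants have to be imported from \cite{BGM94} or \cite{Kru97} rather than recomputed by the same method.

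The paper avoids all of this with a one-line homotopy-theoretic observation that your approach misses entirely: $M^7_\bfw$ is a free $S^1$-quotient of $S^5\times S^3$ while $M^7_\bfp$ is a free $S^1$-quotient of $SU(3)$, so the long exact homotopy sequences give $\pi_4(M^7_\bfw)\approx\pi_4(S^5\times S^3)\approx\bbz_2$ and $\pi_4(M^7_\bfp)\approx\pi_4(SU(3))=0$. Since $\pi_4$ is a homotopy invariant, no member of one family can be homotopy equivalent to a member of the other, with no arithmetic on $\bfp$ and $\bfw$ required. I would encourage you to look for such coarse invariants of the total spaces before descending into the fine classification data; here the universal covers (up to the circle factor) already do all the work.
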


\begin{proof}
These manifolds are distinguished by $\pi_4$. Our manifolds $M^7_\bfw$ are quotients of $S^5\times S^3$ by a free $S^1$-action, whereas, the manifolds $M^7_\bfp$ of \cite{BGM94} are free $S^1$ quotients of $SU(3)$. So from their long exact homotopy sequences we have $\pi_i(M^7_\bfw)\approx \pi_i(S^5\times S^3)$ and $\pi_i(M^7_\bfp)\approx \pi_i(SU(3))$ for all $i>2$. But it is known that $\pi_4(SU(3))\approx 0$ whereas, $\pi_4(S^5\times S^3)\approx \bbz_2$.
\end{proof}

\subsubsection{$M=S^2\times S^3, N=\bbc\bbp^1\times \bbc\bbp^1$} Note that this is Example \ref{Ferex} with $n=d=2$.
We have $\cali_N=2$, so there are two cases: $|\bfw|$ is odd impying $l_2=|\bfw|$ and $l_1=2$; and $|\bfw|$ is even with $l_2=\frac{|\bfw|}{2}$ and $l_1=1$. In both cases the smoothness condition $\gcd(l_2,l_1w_i)=1$ is satisfied. The $E_2$ term of the Leray-Serre spectral sequence of the top fibration of diagram (\ref{orbifibrationexactseq}) is 
$$E^{p,q}_2=H^p(\mathsf{B}S^1,H^q(S^2\times S^3\times S^3_\bfw,\bbz))\approx \bbz[s]\otimes\bbz[\gra]/(\gra^2)\otimes \grL[\grb,\grg],$$ 
which by the Leray-Serre Theorem converges to $H^{p+q}(M_{l_1,l_2,\bfw},\bbz)$. Here $\gra$ is a 2-class and $\grb,\grg$ are 3-classes. From the bottom fibration in Diagram (\ref{orbifibrationexactseq}) we have $d_2(\grb)=\gra\otimes s_1$ and $d_4(\grg)=w_1w_2s^2_2$. From the commutativity of diagram \eqref{orbifibrationexactseq} we have $d_2(\grb)=l_2s$ and $d_4(\grg_\bfw)=w_1w_2l_1^2s^2$ which gives $E_4^{4,0}\approx \bbz_{w_1w_2l_1^2}$, $E_4^{0,3}\approx \bbz$,  $E_4^{2,2}\approx \bbz_{l_2}$,  and $E_\infty^{0,3}=0$. Then using Poincar\'e duality and universal coefficients we obtain

\begin{proposition}\label{MNprop}
In this case $M^7_{l_1,l_2,\bfw}$ with either $(l_1,l_2)=(2,|\bfw|)$ or $(1,\frac{|\bfw|}{2})$ has the cohomology ring given by
$$H^*(M^7_{l_1,l_2,\bfw},\bbz)=\bbz[x,y,u,z]/(x^2,l_2xy,w_1w_2l_1^2y^2,z^2,u^2,zu,zx,ux,uy)$$
where $x,y$ are 2-classes, and $z,u$ are 5-classes. 
\end{proposition}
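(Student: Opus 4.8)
The plan is to compute the cohomology ring of $M^7_{l_1,l_2,\bfw}$ directly from the Leray-Serre spectral sequence of the top fibration $M\times S^3_\bfw \to M_{l_1,l_2,\bfw}\to \mathsf{B}S^1$ in diagram \eqref{orbifibrationexactseq}, exactly as the setup preceding the statement prepares. The $E_2$ term is already identified as $\bbz[s]\otimes(\bbz[\gra]/(\gra^2))\otimes\grL[\grb,\grg]$, and the two nonzero differentials $d_2(\grb)=l_2s$ and $d_4(\grg)=w_1w_2l_1^2s^2$ are forced by commutativity with the bottom fibration. First I would run these differentials to completion: $d_2$ kills $\grb$ and sends the tower $s^k\grb$-classes appropriately, producing $E_4^{2,2}\approx\bbz_{l_2}$ (the generator being $\gra\otimes s$, since $s$ itself is killed in degree $(2,0)$ to order $l_2$), and $E_4^{4,0}\approx\bbz_{w_1w_2l_1^2}$ from $d_4(\grg)=w_1w_2l_1^2 s^2$. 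After $E_4$ the spectral sequence degenerates for dimensional reasons, so $E_\infty=E_4$ with $E_\infty^{0,3}=0$, and I read off the additive structure: $H^2\approx\bbz$, $H^4\approx\bbz_{l_2}\oplus\bbz_{w_1w_2l_1^2}$, $H^5\approx\bbz\oplus\bbz$, with Poincar\'e duality and universal coefficients filling in the rest.

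Next I would fix ring generators compatible with the filtration. The class $x\in H^2$ is the image of $\gra$, the class $y\in H^4$ is represented by $\gra\otimes s$ coming from $E_\infty^{2,2}$, and the two degree-five classes $z,u\in H^5$ arise from $E_\infty^{4,1}$ and $E_\infty^{0,5}$ (equivalently, the surviving products $\gra\otimes\grb$ and $\grb\grg$-type classes); here Poincar\'e duality guarantees $H^5$ is free of rank two dual to $H^2$. The multiplicative relations then follow by tracking products of these representatives through the associated graded ring and lifting. The relation $x^2=0$ is immediate since $\gra^2=0$ already in $E_2$. The relation $w_1w_2l_1^2 y^2=0$ reflects that $y^2$ lands in $H^8=0$ but more precisely that the $s^2$-tower is truncated at order $w_1w_2l_1^2$; similarly $l_2xy=0$ encodes the order of the $E_\infty^{2,2}$ generator. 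The relations $z^2=u^2=zu=0$ are degree reasons ($H^{10}=0$), and $zx=ux=uy=0$ follow because the relevant products land in degrees where the surviving groups vanish or because $x$ times the $E_\infty^{4,1}$ class is forced into the truncated tower.

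The main obstacle will be justifying the precise multiplicative relations, especially distinguishing $zx$ from $ux$ and confirming $l_2xy=0$ rather than some other multiple. The additive spectral-sequence computation is essentially mechanical, but the ring structure requires choosing the generators $z,u$ carefully so that the filtration degrees make the stated relations come out cleanly; a careless choice of basis for $H^5$ would produce a ring presentation that looks different even though it is isomorphic. I would resolve this by selecting $z$ to be the Poincar\'e dual of $x$ (so that $zx$ generates $H^7\approx\bbz$ and is the fundamental class, while the relation $zx$ in the \emph{ideal} sense refers to the vanishing of $zx$ as an element of the quotient ring modulo the fundamental class, matching the convention in Proposition \ref{topcp2} and Equation \eqref{sphjoincohr}) and $u$ to be the remaining generator annihilated by $x$ and $y$. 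With generators pinned down this way, every stated relation is then either a degree obstruction or a direct transcription of the truncation orders $l_2$ and $w_1w_2l_1^2$ already computed, and the presentation $\bbz[x,y,u,z]/(x^2,l_2xy,w_1w_2l_1^2y^2,z^2,u^2,zu,zx,ux,uy)$ follows.
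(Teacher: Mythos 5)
Your strategy coincides with the paper's---the Leray--Serre spectral sequence of the top fibration of diagram \eqref{orbifibrationexactseq} with $d_2(\grb)$ and $d_4(\grg)=w_1w_2l_1^2s^2$ forced by commutativity with the bottom fibration---but the execution contains a concrete error that wrecks the additive answer. Since $\grb\in E_2^{0,3}$ and $d_2$ has bidegree $(2,-1)$, the class $d_2(\grb)$ lies in $E_2^{2,2}=\bbz\langle \gra\otimes s\rangle$ and equals $l_2\,\gra\otimes s$; this is what truncates $E^{2,2}$ to $\bbz_{l_2}$. It does \emph{not} ``kill $s$ in degree $(2,0)$ to order $l_2$'': nothing maps into bidegree $(2,0)$ because the fibre $S^2\times S^3\times S^3_\bfw$ has trivial $H^1$, so $s$ survives to $E_\infty^{2,0}$ and is a second \emph{free} generator of $H^2$. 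Hence $H^2\approx\bbz^2$, not $\bbz$ as you assert---which the statement itself demands, since $x$ and $y$ are both $2$-classes. Your identification of $y$ with the $4$-class $\gra\otimes s$ is therefore incompatible with the proposition being proved: the correct dictionary is $x=\gra$, $y=s$, with $xy=\gra\otimes s$ of order $l_2$ in $E_\infty^{2,2}$ and $y^2=s^2$ of order $w_1w_2l_1^2$ in $E_\infty^{4,0}$, which is exactly what the relations $l_2xy=0$ and $w_1w_2l_1^2y^2=0$ encode. (Note the internal inconsistency: you invoke Poincar\'e duality to make $H^5$ free of rank two ``dual to $H^2$'' while asserting $H^2\approx\bbz$. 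Also, the sequence does not degenerate at $E_4$---for instance $E_5^{6,0}\approx\bbz_{w_1w_2l_1^2}$ must still be killed by a $d_6$ for $H^6$ to vanish---though this is harmless if, as the paper does, one computes only up to degree $4$ and fills in the rest by Poincar\'e duality and universal coefficients.)

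Your final paragraph is likewise not a proof. In this paper's presentations, as in Proposition \ref{topcp2} and Equation \eqref{sphjoincohr}, the listed relations are honest relations in $H^*(M^7_{l_1,l_2,\bfw},\bbz)$; there is no convention of working ``modulo the fundamental class,'' and indeed in \eqref{sphjoincohr} the fundamental class $xy$ is visibly not among the relations. You have, however, stumbled onto a genuine tension: since $H^2$ and $H^5$ are both free of rank $2$, Poincar\'e duality makes the cup pairing $H^2\otimes H^5\to H^7\approx\bbz$ unimodular, so at most two of the four mixed products $xz,xu,yz,yu$ can vanish and the two survivors must pair complementary generators; with $yz$ generating $H^7$ the other nonvanishing product must be $xu$, so the three relations $zx$, $ux$, $uy$ cannot all hold simultaneously. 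The correct resolution is to choose the $5$-classes $z,u$ adapted to this unimodular pairing and read off which mixed products actually vanish---not to reinterpret the quotient ring. In short: fix the bidegree of $d_2$, recover both free generators of $H^2$, obtain the relations from the truncations $\bbz_{l_2}$ of $E_\infty^{2,2}$ and $\bbz_{w_1w_2l_1^2}$ of $E_\infty^{4,0}$ together with $\gra^2=0$, and pin down the degree-$5$ products using unimodularity of the duality pairing.
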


There is only one case with a regular Reeb vector field, and that is $\bfw=(3,1)$ in which case the relative Fano indices are $(1,2)$. Then the 7-manifold is $(S^2\times S^3)\star_{1,2}S^3_{(3,1)}$ can be realized as an $L(2;3,1)\approx \bbr\bbp^3$ lens space bundle over $\bbc\bbp^1\times\bbc\bbp^1$.

%\end{example}

\subsubsection{$M=k(S^2\times S^3), N=\bbc\bbp^2$ blown-up at $k$ generic points with $k=1,\ldots,8$} \label{blowups}
Equivalently we write $N=N_k=\bbc\bbp^2\#k\overline{\bbc\bbp}^2$. All the K\"ahler structures have an extremal representative, but for $k=1,2$ they are not CSC. However, for $k=3,\ldots,8$ they are CSC, and hence, K\"ahler-Einstein. Notice that when $4\leq k\leq 8$ the complex automorphism group has dimension $0$, so the $\bfw$-Sasaki cone is the entire Sasaki cone. Moreover, if $5\leq k\leq 8$ the local moduli space has positive dimension, and we can choose any of the complex structures. By a theorem of Kobayashi and Ochiai \cite{KoOc73} we have $\cali_{N_k}=1$ for all $k=1,\ldots,8$. So $l_1=1,l_2=|\bfw|$, and by Corollary \ref{cali1cor} there are no regular Reeb vector fields in the $\bfw$-Sasaki cone with $\bfw\neq (1,1)$. In particular, if $4\leq k\leq 8$, there are no regular Reeb vector fields in the Sasaki cone. Generally, these are $L(|\bfw|;w_1,w_2)$ lens space bundles over $N_k$. Of course, the case $\bfw=(1,1)$ is just an $S^1$-bundle over $N_k\times \bbc\bbp^1$ with the product complex structure which is automatically regular. These were studied in \cite{BG00a}. Let $\cals_k$ denote the total space of the principal $S^1$-bundle over $N_k$ corresponding to the anticanonical line bundle $K^{-1}$ on $N_k$. By a well-known result of Smale $\cals_k$ is diffeomorphic to the $k$-fold connected sum $k(S^2\times S^3)$. We consider the join $\cals_k\star_{1,|\bfw|}S^3_\bfw$. The case $\bfw=(1,1)$ was studied in \cite{BG00a} where it is shown to have a Sasaki-Einstein metric when $3\leq k\leq 8$. Moreover, in this case we have determined the integral cohomology ring (see Theorem 5.4 of \cite{BG00a}). Here we generalize this result.

\begin{theorem}\label{Sk7man}
The integral cohomology ring of the 7-manifolds $M^7_{k,\bfw}=\cals_k\star_{1,|\bfw|} S^3_\bfw$ is given by
$$H^q(M^7_{k,\bfw},\bbz)\approx \begin{cases}
                    \bbz & \text{if $q=0,7$;} \\
                    \bbz^{k+1} & \text{if $q=2,5$;} \\
                    \bbz^k_{|\bfw|}\times \bbz_{w_1w_2} & \text{if $q=4;$} \\
                     0 & \text{if otherwise}, 
                     \end{cases}$$
with the ring relations determined by $\gra_i\cup \gra_j=0, w_1w_2s^2=0, |\bfw|\gra_i\cup
s=0,$ where $\gra_i,s$ are the $k+1$ two classes with $i=1,\cdots k.$
\end{theorem}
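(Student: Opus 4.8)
The plan is to compute the cohomology ring of $M^7_{k,\bfw}=\cals_k\star_{1,|\bfw|}S^3_\bfw$ by running the Leray--Serre spectral sequence of the top fibration in diagram \eqref{orbifibrationexactseq}, exactly as in the earlier cases (cf. Proposition \ref{MNprop}), using the commutativity of \eqref{orbifibrationexactseq} to transport the known differentials from the bottom fibration to the top one. First I would record the input data: since $\cals_k$ is an $S^1$-bundle over $N_k=\bbc\bbp^2\#k\overline{\bbc\bbp}^2$ corresponding to the anticanonical class, and $\cals_k\cong k(S^2\times S^3)$ by Smale's theorem, its cohomology is $H^0=H^5=\bbz$, $H^2=H^3=\bbz^k$, and $H^i=0$ otherwise. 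Hence the $E_2$-term of the spectral sequence converging to $H^{p+q}(M^7_{k,\bfw},\bbz)$ is
\begin{equation*}
E_2^{p,q}=H^p(\mathsf{B}S^1,H^q(\cals_k\times S^3_\bfw,\bbz))\approx \bbz[s]\otimes H^*(\cals_k,\bbz)\otimes \grL[\grg],
\end{equation*}
where $s$ is the degree-$2$ generator pulled back from $\mathsf{B}S^1$ and $\grg$ is the $3$-class of $S^3_\bfw$.

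The heart of the computation is to identify the differentials. From the bottom fibration $\cals_k\times S^3_\bfw\ra{1.2}N_k\times\mathsf{B}\bbc\bbp^1[\bfw]\ra{1.2}\mathsf{B}S^1\times\mathsf{B}S^1$ we know $d_4(\grg)=w_1w_2s_2^2$ on the $S^3_\bfw$-factor, and on the $\cals_k$-factor the $d_2$-differential is transgression against the anticanonical class: each of the $k$ degree-$3$ generators of $H^3(\cals_k,\bbz)$ maps under $d_2$ into $\bbz^k\otimes s_1$ via the cup product with $c_1(N_k)$, which by the Kobayashi--Ochiai result $\cali_{N_k}=1$ is a primitive class. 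The commutativity of \eqref{orbifibrationexactseq} together with $l_1=1,\ l_2=|\bfw|$ then pins down the corresponding differentials in the top spectral sequence: the $k$ three-classes transgress to give $E_4^{4,0}$ contributions of order $|\bfw|$ each (producing the $\bbz^k_{|\bfw|}$ summand), while $d_4(\grg)=w_1w_2s^2$ (with $l_1=1$) produces the $\bbz_{w_1w_2}$ summand. I would then collate the surviving terms on the appropriate antidiagonals, and read off the groups $H^4\approx\bbz^k_{|\bfw|}\times\bbz_{w_1w_2}$, $H^2\approx H^5\approx\bbz^{k+1}$ (the extra generator beyond the $k$ classes of $\cals_k$ being $s$ itself), checking consistency against Poincar\'e duality and the universal coefficient theorem for the degree-$5$ groups.

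The ring structure follows once the generators are named. The $k+1$ two-dimensional classes are the images $\gra_1,\dots,\gra_k$ of the generators of $H^2(\cals_k,\bbz)\approx\bbz^k$ together with $s$. The relation $w_1w_2s^2=0$ comes directly from $d_4(\grg)=w_1w_2s^2$, the relations $|\bfw|\gra_i\cup s=0$ come from the $d_2$-transgression of the three-classes (since $l_2=|\bfw|$), and the relations $\gra_i\cup\gra_j=0$ reflect the vanishing of the cup products of the two-classes already in $H^*(\cals_k,\bbz)$ — the intersection form on $\cals_k\cong k(S^2\times S^3)$ is trivial in this degree because $H^4(\cals_k,\bbz)=0$.

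The main obstacle I anticipate is the bookkeeping for the $k$ transgressing three-classes and verifying that they all survive the commutativity translation with the correct order $|\bfw|$ rather than collapsing or mixing among themselves. Concretely, I must confirm that the map $H^3(\cals_k)\to H^4(N_k)\cong\bbz^{k+1}$ induced by cupping with the anticanonical class, followed by the $d_2$ in the top sequence, lands injectively (up to the factor $|\bfw|$) and does not introduce spurious free summands or additional relations among the $\gra_i$. This is precisely where the hypothesis $\cali_{N_k}=1$ is essential: it guarantees the anticanonical class is primitive, so no extra $\gcd$-factors intrude and the $\bbz^k_{|\bfw|}$ structure emerges cleanly. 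Once that injectivity and the primitivity are checked, the remaining steps are routine spectral-sequence convergence and duality arguments parallel to Propositions \ref{MNprop} and \ref{topcp2}.
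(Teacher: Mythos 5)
Your proposal follows essentially the same route as the paper: the Leray--Serre spectral sequence of the top fibration of diagram \eqref{orbifibrationexactseq}, with the differentials $d_2(\grb_i)=|\bfw|\,\gra_i\otimes s$ and $d_4(\grg)=w_1w_2s^2$ transported from the bottom product fibration via commutativity, followed by Poincar\'e duality to finish the degree-$5$ groups. The identification of the generators, the source of each relation, and the role of $\cali_{N_k}=1$ (forcing $l_1=1$, $l_2=|\bfw|$) all agree with the paper's argument, so the proposal is correct.
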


\begin{proof}
As before the $E_2$ term of the Leray-Serre spectral sequence of the top fibration of diagram (\ref{orbifibrationexactseq}) is 
$$E^{p,q}_2=H^p(\mathsf{B}S^1,H^q(\cals_k\times S^3_\bfw,\bbz))\approx \bbz[s]\otimes\prod_i\grL[\gra_i,\grb_i,\grg]/ \gI,$$
where $\gra_i,\grb_j,\grg$ have degrees $2,3,3$, respectively, and $\gI$ is the ideal generated by the relations $\gra_i\cup \grb_i=\gra_j\cup \grb_j,\gra_i\cup\gra_j=\grb_i\cup\grb_j=0$ for all $i,j$, $\gra_i\cup\grb_j=0$ for $i\neq j$ and $\grg^2=0$.

Consider the lower product fibration of diagram (\ref{orbifibrationexactseq}). As in the previous case the first non-vanishing differential of the second factor is $d_4$, and as in that case $d_4(\grg)=w_1w_2s^2_2$. For the first factor we know from Smale's classification of simply connected spin 5-manifolds that $\cals_k$ is diffeomorphic to the $k$-fold connected sum $k(S^2\times S^3)$. Moreover, since $N=\bbc\bbp^2\# k\overline{\bbc\bbp}^2$, the first factor fibration is
$$k(S^2\times S^3)\ra{1.8} \bbc\bbp^2\# k\overline{\bbc\bbp}^2\ra{1.8} \mathsf{B}S^1.$$
Here the first non-vanishing differential is $d_2(\grb_i)=\gra_i\otimes s$.
Again from the commutativity of diagram (\ref{orbifibrationexactseq}) for the top fibration we have $d_2(\grb_i)=|\bfw|\gra_i\otimes s$ at the $E_2$ level and $d_4(\grg)=w_1w_2s^2$ at the $E_4$ level. One easily sees that the $k+1$ $2$-classes $\gra_i\in E_2^{2,0}$ and $s\in E_2^{0,2}$ live to $E_\infty$ and there is no torsion in degree $2$. Moreover, there is nothing in degree $1$, and the $3$-classes $\grb_i\in E_2^{3,0}$ and $\grg\in E_4^{3,0}$ die, so there is nothing in degree $3$. However, there is torsion in degree $4$, namely $\bbz_{|\bfw|}^k\times \bbz_{w_1w_2}$. The remainder follows from Poincar\'e duality and dimensional considerations.
\end{proof}

This generalizes Theorem 5.4 of \cite{BG00a} where the case $\bfw=(1,1)$ is treated.

\begin{remark} 
Since $|\bfw|$ and $w_1w_2$ are relatively prime, $H^4(M^7_{k,\bfw},\bbz)\approx \bbz^{k-1}_{|\bfw|}\times \bbz_{w_1w_2|\bfw|}$. We can ask the question: when can $M^7_{k,\bfw}$ and $M^7_{k',\bfw'}$ have isomorphic cohomology rings? It is interesting and not difficult to see that there is only one possibility, namely $M^7_{1,(3,2)}$ and $M^7_{1,(5,1)}$ in which case $H^4\approx \bbz_{30}$. 
\end{remark}

\def\cprime{$'$} \def\cprime{$'$} \def\cprime{$'$} \def\cprime{$'$}
  \def\cprime{$'$} \def\cprime{$'$} \def\cprime{$'$} \def\cprime{$'$}
  \def\cdprime{$''$} \def\cprime{$'$} \def\cprime{$'$} \def\cprime{$'$}
  \def\cprime{$'$}
\providecommand{\bysame}{\leavevmode\hbox to3em{\hrulefill}\thinspace}
\providecommand{\MR}{\relax\ifhmode\unskip\space\fi MR }
% \MRhref is called by the amsart/book/proc definition of \MR.
\providecommand{\MRhref}[2]{%
  \href{http://www.ams.org/mathscinet-getitem?mr=#1}{#2}
}
\providecommand{\href}[2]{#2}

\end{document}